\crefname{equation}{}{}
\def\ev{{\mathrm{e}}}
\def\lb{{\Delta_\Gamma}}
\def\del{{\partial}}
\def\gj{{G_\Gamma^j}}
\def\G{{\mathcal{G}}}
\def\L{{\mathcal{L}}}
\def\k{{k_\Gamma}}
\def\o{{\circ}}
\newcommand\tosp[2]{{{#2},{#1}}}
\newcommand{\tp}{{\scriptscriptstyle\mathsf{T}}}
\def\cS{{\mathcal{S}}}
\def\cD{{\mathcal{D}}}
\newtheorem{theorem}{Theorem}[section]
\newtheorem{corollary}{Corollary}[theorem]
\newtheorem{lemma}[theorem]{Lemma}
\theoremstyle{definition}
\theoremstyle{remark}
\newtheorem*{remark}{Remark}
\title{A boundary integral formulation of an acoustic boundary layer model in 2D}
\author[JL]{Jacob Linden}
\address[JL]{Computational and Applied Mathematics, University of Chicago}
\email[JL]{jacoblinden@uchicago.edu}
\author[TA]{Travis Askham}
\address[TA]{Department of Mathematical Sciences, New Jersey Institute of Technology}
\email[TA]{askham@njit.edu}
\author[JH]{Jeremy Hoskins}
\address[JH]{Computational and Applied Mathematics, University of Chicago, and NSF-Simons National Institute for Theory and Mathematics in Biology}
\email[JH]{jeremyhoskins@uchicago.edu}
\date{\today}
\begin{document}

\begin{abstract}

We present a boundary integral formulation of the Helmholtz equation with visco-thermal boundary conditions, in two dimensions. Such boundary conditions allow for the accurate simulation of viscous and thermal losses in the vicinity of the boundary, which are particularly relevant in acoustic devices with narrow features. Using cancellations between hyper-singular operators, a variant of the method of images technique, and analytic pre-conditioners, we derive integral equations that are Fredholm second-kind, up to the application of a boundedly invertible operator. This approach allows for the fast and accurate solution of acoustics problems with boundary layers. 
    
\end{abstract}

\maketitle

\section{Introduction}

While wave equation (or Helmholtz equation, for time harmonic waves) and acoustic ray models are common in acoustic wave propagation, 
the effects of the fluid medium must be taken into account to obtain an accurate model
in certain regimes.
In particular, viscous and thermal effects are significant in a variety of acoustic devices with narrow regions, including hearing aids \cite{kampinga_viscothermal_2010,cordioli_comparison_2010}, micro-electro-mechanical systems \cite{homentcovschi_microacoustic_2014,naderyan_computational_2019,hassanpour_guilvaiee_femmodeling_2023}, metamaterials \cite{henriquez_viscothermal_2017,moleron_visco-thermal_2016,romerogarcia_viscothermal_2019,joseph_impact_2024}, perforated panels \cite{carbajo_finite_2015,billard_numerical_2021,na_unified_2023}, and phase plugs \cite{berggren_highly_2018,andersen_shape_2019,christensen_compression_2011}.
There is a large body of recent work on the design and optimization of such devices \cite{bernland_shape_2019,berggren_better_2024,mousavi_extending_2023,noguchi_topology_2022,dilgen_three_2022,tissot_optimal_2020,henriquez_three-dimensional_2018}, and, more generally, the simulation of visco-thermal effects \cite{sambuc_numerical_2014,kampinga_performance_2010,cops_estimation_2020,preuss_revising_2023,paltorp_open-source_2024,andersen_modelling_2018}. 
Many of these studies model acoustic waves using the linearized Navier-Stokes equations, or simplifications
thereof, and solve the equations by boundary element and finite element methods. 

Because viscous and thermal losses mostly occur in narrow boundary layers, it is appealing to
model acoustic waves in such devices by enforcing the Helmholtz equation in the bulk and accounting for
the losses through an effective boundary condition.
In \cite{berggren_acoustic_2018}, a boundary layer analysis was used to derive an effective 
boundary condition for visco-thermal effects of the form
\begin{equation}\label{eq:vtbc}
    c_1\lb u+c_2u+\del_n u = f,
\end{equation}
where $\lb$ denotes the Laplace-Beltrami operator on the boundary, $\partial_n$ denotes the normal derivative, and $c_1$ and $c_2$ are constants depending on the frequency, specific heat capacities, and boundary layer thicknesses. The authors of~\cite{berggren_acoustic_2018} then verified the model against simulations of the full linearized Navier-Stokes
equations in a phase plug geometry and found good agreement. Boundary conditions of this form have appeared in other wave propagation applications, where they are known as  generalized impedance boundary conditions~\cite{aslanyurek2011generalized,antoine2001high,ammari1999generalized,shumpert2000impedance,cakoni2012integral} or \emph{Wentzell} boundary conditions~\cite{venttsel_boundary_1959,coclite_role_2009}.

Boundary integral methods present several advantages for the solution of the Helmholtz equation, particularly
for problems in complicated geometries. These 
methods represent the solution throughout the domain as a layer potential induced by a density defined on the 
boundary alone, so that only the boundary of the domain needs to be discretized and 
fewer degrees of freedom can be used to resolve a given problem. Depending on the boundary condition, 
an appropriate layer potential representation can be designed so that the density is determined by a 
Fredholm second-kind integral equation. In this case, the numerical approximations of the integral equation
for the density remain well-conditioned under refinement of the boundary discretization. Such representations
are also useful analytically in establishing the existence and uniqueness of solutions, characterizing 
the nullspaces when the equations are not invertible, and analyzing solutions in singular geometries.
While we do not seek to review the literature here, see
\cite{Atkinson1997,greengard_accelerating_1998,sauter2011boundary,colton_integral_2013,martinsson2019fast}
and the references therein for more on the theory and practice of boundary integral methods.

In this work, we construct and analyze a novel boundary integral equation formulation of the Helmholtz equation with the
boundary condition~\eqref{eq:vtbc}. The method is based on
a particular combined field representation of the pressure and an analytic surface pre-conditioner,
resulting in an integral equation for the density that is Fredholm second-kind. We show that this representation can also be modified to
handle ``domain decomposition'' problems, as in~\cite{berggren_acoustic_2018}, where the boundary
layer condition~\eqref{eq:vtbc} is imposed in a wave guide and impedance conditions are imposed on
artificial curves that enclose the ends. Using an image method, it is possible to again obtain
an integral equation that is Fredholm second kind.
Our representations are amenable to standard integral equation methods for their discretization and efficient solution and the techniques for deriving the
representations apply more broadly to generalized impedance boundary conditions of this type. 
We demonstrate the effectiveness and properties of the representation through several numerical simulations.

\section{Background}

\subsection{Description of the geometry}\label{subsec:geomdesc}

Consider a region $\Omega$ (not necessarily bounded) with positively oriented boundary $\Gamma \vcentcolon= \partial\Omega$ of finite length. In this paper we treat the following two classes of problems.

\textbf{Case I:} The boundary $\Gamma$ is smooth, and visco-thermal boundary conditions are imposed on the entire boundary. In this case, we denote $\Gamma=\Gamma_*$. An example is given in Figure \ref{fig:casei}.

\textbf{Case II:} The boundary $\Gamma$ is the union of two collections of smooth, disjoint planar curves $\Gamma_*$ and $\Gamma_\o.$ We impose the additional assumptions that $\Gamma_*$ and $\Gamma_\o$ intersect at right angles, and that the curvature of $\Gamma_\o$ vanishes at its boundary. Visco-thermal boundary conditions are imposed on $\Gamma_*$ and Robin boundary conditions are imposed on $\Gamma_\o$. Though in principle $\Gamma_\o$ could represent a physical surface, our main motivation is domain decomposition, where the solutions in adjacent regions are coupled together using impedance-to-impedance maps (see section \ref{subsec:dd}). In this context, the freedom to choose $\Gamma_\circ$ enables our imposition of the additional constraints.

For simplicity we assume in the sequel that $\Gamma_*$ and $\Gamma_\o$ are each the union of two disjoint curves: $\Gamma_*= \Gamma_*^1\cup\Gamma_*^2$ and $\Gamma_\o= \Gamma_\o^1\cup\Gamma_\o^2$. However, our method easily extends to treat arbitrary (finite) numbers of connected components. An example geometry is given in Figure \ref{fig:caseii}.

\begin{figure}[htbp]
    \centering
    \begin{subfigure}[t]{0.8\textwidth}
        \centering
        \includegraphics[width=\textwidth]{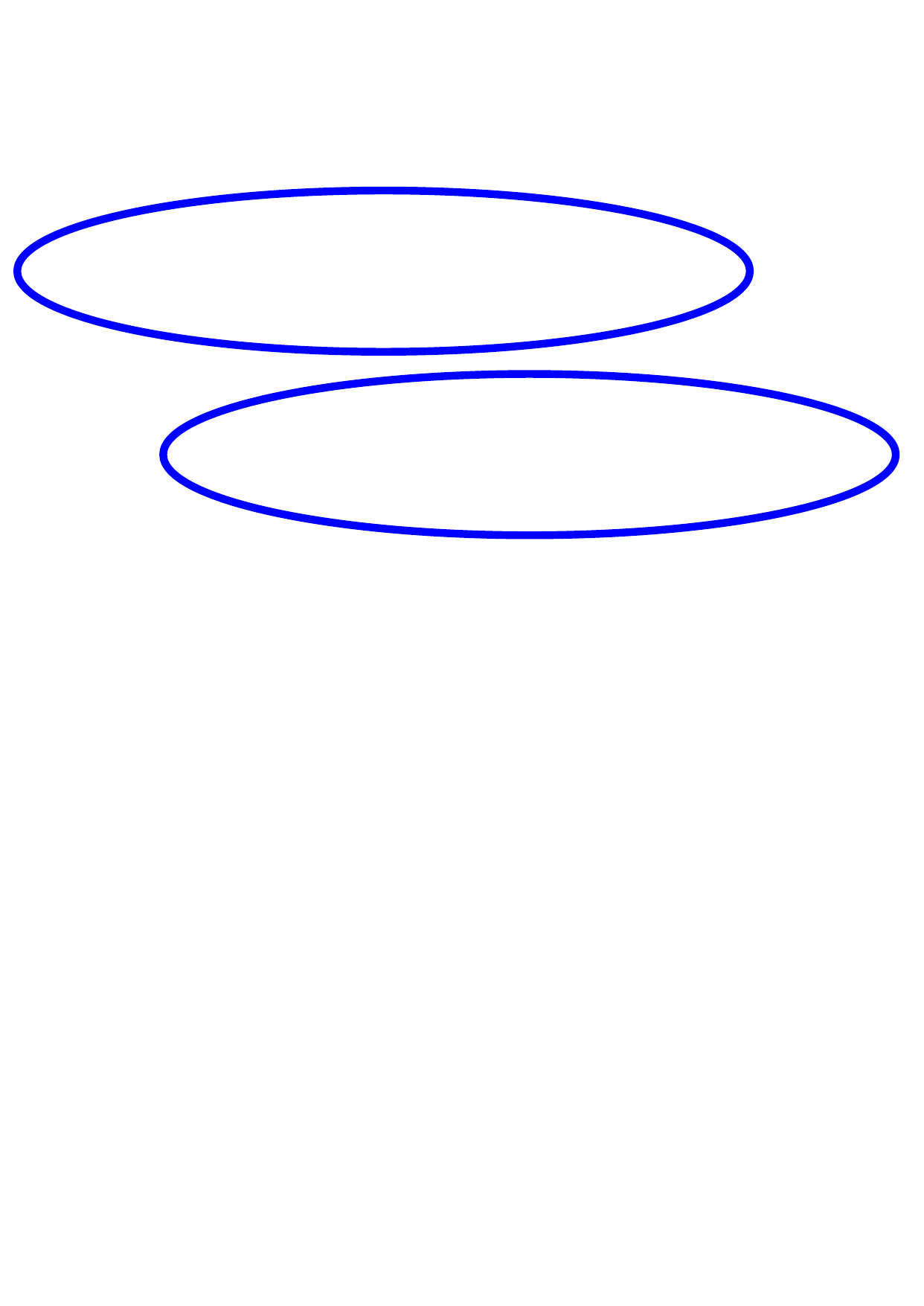}
        \caption{Case I: the boundary $\Gamma$ is a smooth curve or union of smooth curves, on which visco-thermal boundary conditions are imposed. In this case, we denote $\Gamma=\Gamma_*$.}
        \label{fig:casei}
    \end{subfigure}
    \begin{subfigure}[t]{0.8\textwidth}
        \centering
        \includegraphics[width=\textwidth]{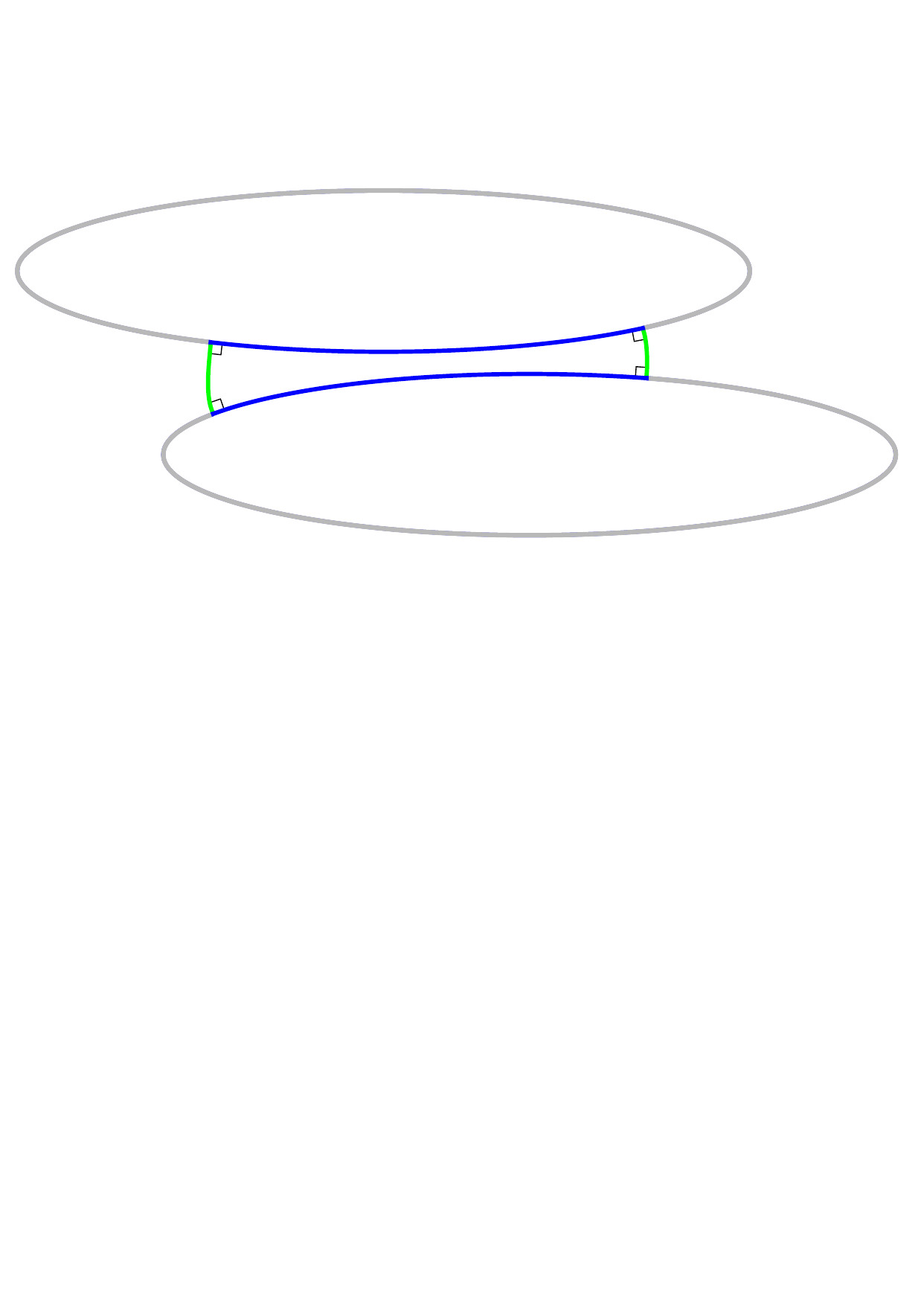}
        \caption{Case II: $\Gamma$ is the union of $\Gamma_*$ (blue) and a fictitious boundary $\Gamma_\o$ (green).}
        \label{fig:caseii}
    \end{subfigure}
    \caption{}
    \label{fig:cases}
\end{figure}

\subsection{Modeling boundary layers as a boundary condition}

We denote the viscous and thermal acoustic boundary layer thicknesses by $\delta_V$ and $\delta_T$ respectively. These quantities depend in particular on the frequency $\omega$, and typically lie in the range of $20-400\,\mu\mathrm{m}$ for audible sound \cite{berggren_acoustic_2018}. The wave number is $k=\omega/c$, where $c$ is the speed of sound. The constant $\gamma \geq 1$ will denote the ratio of the specific heat capacities at constant pressure and constant volume. We define the coefficients 
\begin{align}
    c_1 &= -\delta_V\frac{i-1}{2},\label{eq:c1} \\
    c_2 &= \delta_Tk^2(\gamma-1)\frac{i-1}{2}.\label{eq:c2}
\end{align}
Assuming that the minimal radius of curvature in $\Gamma_*$ far exceeds the boundary layer thicknesses, we consider the following boundary value problem for the acoustic pressure $u$ \cite{berggren_acoustic_2018}:
\begin{equation}\label{eq:pde}\begin{cases}
    \Delta u + k^2 u = 0 & \Omega, \\
    c_1\lb u + c_2u +\partial_nu = f & \Gamma_*, \\
    au+\partial_nu = g & \Gamma_\circ , \\
    \partial_b u = h & \partial \Gamma_*.
\end{cases}\end{equation}
Here $a$ is a complex constant, $\lb$ denotes the Laplace-Beltrami operator, and $\partial_n$, $\partial_b$ denote the normal and binormal derivatives respectively. 

The PDE \eqref{eq:pde} is used in Case II, while in Case I, $\Gamma_\circ$ and $\del\Gamma_*$, and their associated boundary conditions, are omitted.
In Case I, if $\Omega$ is an exterior domain then an additional (outgoing) radiation condition
at infinity is imposed:

\begin{equation}
\label{eq:sommerfeld}
\lim_{|x|\to\infty} \frac{x}{|x|}\cdot\nabla u(x) - iku(x) = o\left(\frac{1}{\sqrt{|x|}}\right) \; .
\end{equation}

\subsection{Mathematical preliminaries}

The (outgoing) free space Green's function for the Helmholtz equation in two dimensions is given by
\begin{equation} G(x,y) = \frac{i}{4}H_0^{(1)}(k|x-y|), \end{equation}
where $H_0^{(1)}$ is the zeroth order Hankel function of the first kind (see \cite[\href{https://dlmf.nist.gov/10.2.E5}{(10.2.5)}]{NIST:DLMF}), and satisfies the PDE
\begin{equation}\label{eq:greens}
\Delta_x G(x,y) + k^2G(x,y) = \delta(x-y),
\end{equation}
together with the radiation condition~\eqref{eq:sommerfeld}.

Given a piecewise smooth curve $\Gamma$, the \emph{single layer} and \emph{double layer} operators are defined, respectively, by
\begin{align}
    S[\sigma](x) &= \int_\Gamma G(x,y)\sigma(y)\,{\rm d}s(y),\label{eq:S} \\ 
    D[\sigma](x) &= \int_\Gamma \left[\partial_{n_y}G(x,y)\right]\sigma(y)\,{\rm d}s(y),\label{eq:D}
\end{align}
where $n_y$ denotes the outward normal vector at $y\in\Gamma$ and $\partial_n=n\cdot\nabla$ denotes the normal derivative. For $\sigma\in L^2(\Gamma)$, \eqref{eq:S} and \eqref{eq:D} automatically satisfy the Helmholtz equation in $\mathbb{R}^2\backslash \Gamma$. For the evaluation
of the single and double layer potentials on surface, i.e. for $x\in \Gamma$, we use the notations $\cS[\sigma]$ and 
$\cD[\sigma]$, respectively. We will also need the on-surface normal derivatives of $S$ and $D,$ defined as follows:
\begin{align}
    \cS'[\sigma](x) &= \int_\Gamma \left[\partial_{n_x}G(x,y)\right]\sigma(y)\,{\rm d}s(y), \\ 
    \cD'[\sigma](x) &= \int_\Gamma \left[\partial_{n_x}\partial_{n_y}G(x,y)\right]\sigma(y)\,{\rm d}s(y),
\end{align}
for all $x \in \Gamma.$ Note that $\partial_{n_x}\partial_{n_y}G$ is not integrable on $\Gamma$, so the integral defining the hypersingular operator $D'$ is interpreted in the Hadamard finite part sense \cite{colton_integral_2013}. We will also use the notation $\cS''$ and $\cD''$ for the
analogous operators with two normal derivatives on the $x$ variables, whose integrals must also be interpreted in
the finite part sense~\cite{kolm_quadruple_2003}.

For $x\in\Gamma$ not a corner point, the operators $D$ and $S'$ satisfy the following standard jump relations: 
\begin{align}
    \lim_{h\to 0} D[\sigma](x\pm hn_x) &= \pm\frac{\sigma(x)}{2}+\cD[\sigma](x),\label{eq:Djump} \\
    \lim_{h\to 0} n_x\cdot\nabla S[\sigma](x\pm hn_x) &= \mp\frac{\sigma(x)}{2}+\cS'[\sigma](x).\label{eq:S'jump}
\end{align}
The operators $S$ and $D'$ are continuous across $\Gamma$:
\begin{align}
    \lim_{h\to 0} S[\sigma](x\pm hn_x) &= \cS[\sigma](x),\label{eq:Scont} \\
    \lim_{h\to 0} n_x\cdot\nabla D[\sigma](x\pm hn_x) &= \cD'[\sigma](x).\label{eq:D'cont}
\end{align}

The operators $\cS,\cD,$ and $\cS'$ are bounded from $L^2(\Gamma)$ to itself, while $\cD'$ is bounded from $H^1(\Gamma)$ to $L^2(\Gamma)$. If $\Gamma$ is smooth, then $\cS,\cD,$ and $\cS'$ are compact on $L^2(\Gamma)$ \cite{colton_inverse_2013}. For domains with corners, though $\cS$ remains compact on $L^2(\Gamma)$, both $\cD$ and $\cS'$ are merely bounded in the vicinity of each corner \cite{fabes_spectra_1976}. 

\section{Integral equation formulation}

To obtain an integral representation for problems with the visco-thermal 
boundary condition~\eqref{eq:vtbc}, we first derive several properties
of the Laplace-Beltrami operator applied to the standard Helmholtz layer 
potentials in~\cref{sec:lblayer}. Then, we describe a representation for~\eqref{eq:vtbc}
on a single curve in isolation that leverages cancellations between hypersingular
operators, along with an analytic preconditioning
strategy, in~\cref{sec:vtbcrep}. For a closed curve, as in Case I, these techniques are sufficient to
obtain a Fredholm second kind equation. For an open curve with boundary
conditions on the binormal data, as in Case II, the derivation is formal, as
modifications of the layer potentials are required to ensure that some of the
operators are well-defined. These issues are treated along with the full
mixed boundary value problem for Case II in \cref{sec:mixedbcrep}, which makes
use of an extension of the method of images. Finally, we discuss the use of 
impedance-to-impedance maps for domain decomposition in this context in 
\cref{subsec:dd}.

\subsection{The Laplace-Beltrami operator and Helmholtz layer potentials}
\label{sec:lblayer}
Consider a smooth, possibly open, planar curve $\Gamma$. For $h>0,$ let
\begin{equation} 
\Gamma_h = \{x-hn_x:x\in\Gamma\}.
\end{equation}
We further require $h$ to be sufficiently small so that $\Gamma_h$ does not self-intersect. We then define the operators
\begin{align}
    \cS_{\tosp{\Gamma}{\Gamma_h}}&:L^2(\Gamma)\to L^2(\Gamma_h), \\
    \cD_{\tosp{\Gamma}{\Gamma_h}}&:L^2(\Gamma)\to L^2(\Gamma_h),
\end{align}
by the restriction of the usual layer potentials for densities on $\Gamma$ to $\Gamma_h$.
The curvature on $\Gamma$ is defined by  
\begin{equation}
\kappa=\nabla_\Gamma\cdot n.
\end{equation}
The following lemma is a variation of a fact found, for example, in \cite{oneil_second-kind_2018}:
\begin{lemma}\label{lem:lbSlim} Suppose that $\Gamma$ is a smooth curve and $\sigma \in H^1(\Gamma)$.
Then
\begin{equation} \lim_{h\to0}\lb \cS_{\tosp{\Gamma}{\Gamma_h}}[\sigma] = \lb \cS [\sigma] = -k^2 \cS[\sigma] -\kappa \cS'[\sigma] - \cS''[\sigma]. \end{equation}
\end{lemma}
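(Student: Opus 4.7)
The plan is to work in a tubular neighborhood of $\Gamma$, write the bulk Laplacian in normal coordinates, and exploit the Helmholtz equation together with the jump relations. Let $s$ be arc-length on $\Gamma$ and set $X(s,t) = x(s) + t n(s)$; with the convention $\kappa = \sdiv n$ one has $n'(s) = \kappa(s)\tau(s)$, so the pulled-back Euclidean metric in $(s,t)$ coordinates is $\mathrm{diag}((1+t\kappa)^2,1)$. A direct computation of $\frac{1}{\sqrt{g}}\del_a(\sqrt{g}g^{ab}\del_b)$ then yields
\[
\Delta u(X(s,t)) \;=\; \Delta_{\Gamma_t} u \;+\; \del_t^2 u \;+\; \frac{\kappa(s)}{1+t\kappa(s)}\,\del_t u ,
\]
where $\Delta_{\Gamma_t} = \frac{1}{1+t\kappa}\del_s\bigl(\frac{1}{1+t\kappa}\del_s\cdot\bigr)$ is the intrinsic Laplace–Beltrami on the level set $\{t=\text{const.}\}$; this reduces to $\lb = \del_s^2$ when $t=0$, and converges to $\lb$ uniformly on $\Gamma$ as $t\to 0$ since $\kappa$ is bounded.

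Applied to $u = S[\sigma]$ on $\Gamma_h$ (i.e.\ $t=-h$), the fact that $\Delta S[\sigma] = -k^2 S[\sigma]$ away from $\Gamma$ rearranges, for every $h>0$, to
\[
\Delta_{\Gamma_{-h}} S[\sigma] \;=\; -k^2\, S[\sigma] \;-\; \del_t^2 S[\sigma] \;-\; \frac{\kappa}{1-h\kappa}\,\del_t S[\sigma] \quad \text{on } \Gamma_h ,
\]
whose right-hand side is smooth for fixed $h>0$. Taking $h\to 0^+$ term by term, continuity of the single layer \eqref{eq:Scont} gives $S[\sigma]|_{\Gamma_h} \to \cS[\sigma]$, and the interior-side case of \eqref{eq:S'jump} gives $\del_t S[\sigma]|_{\Gamma_h} \to \tfrac{1}{2}\sigma + \cS'[\sigma]$, so that the last term tends to $-\kappa\bigl(\tfrac{1}{2}\sigma + \cS'[\sigma]\bigr)$.

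The crux is identifying $\lim_{h\to 0^+}\del_t^2 S[\sigma](X(s,-h))$ in terms of the finite-part operator $\cS''$. Rather than attack the hypersingular kernel directly, I would apply the normal-coordinate identity above to $S[\sigma]$ at $t=0^\pm$, subtract, and use continuity of $S[\sigma]$ and of $\lb S[\sigma]$ together with the jump $[\del_t S[\sigma]]^+_- = -\sigma$ from \eqref{eq:S'jump} to deduce $[\del_t^2 S[\sigma]]^+_- = \kappa\sigma$. Interpreting $\cS''[\sigma]$ as the symmetric Hadamard finite part — the average of the two one-sided limits — then yields $\del_t^2 S[\sigma]|_{\Gamma_h} \to \cS''[\sigma] - \tfrac{1}{2}\kappa\sigma$, and substituting the three limits shows that the $\pm\tfrac{1}{2}\kappa\sigma$ contributions cancel exactly, leaving $-k^2\cS[\sigma] - \kappa\cS'[\sigma] - \cS''[\sigma]$. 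The main obstacle throughout is this second-derivative limit and its reconciliation with the finite-part convention for $\cS''$; the hypothesis $\sigma \in H^1(\Gamma)$ is precisely what makes $\cS''[\sigma]$ lie in $L^2(\Gamma)$ and ensures that all of the above convergences hold in that space.
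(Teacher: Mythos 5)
Your overall skeleton is the same as the paper's: decompose the bulk Laplacian in normal coordinates as $\Delta=\lb+\kappa\partial_n+\partial_n^2$ (this is exactly \eqref{eq:lapdecomp}), use $\Delta S[\sigma]=-k^2S[\sigma]$ off $\Gamma$ to solve for the parallel-curve Laplace--Beltrami term, and pass to the limit with the jump relations \eqref{eq:Scont} and \eqref{eq:S'jump}. Where you diverge is precisely at the crux you identify, and there your argument has a genuine gap. The paper resolves the second-normal-derivative term by invoking the one-sided limit formula of \cite{kolm_quadruple_2003}, $\lim_{h\to0}\cS''_{\tosp{\Gamma}{\Gamma_h}}[\sigma]=-\tfrac{\kappa}{2}\sigma+\cS''[\sigma]$, where $\cS''$ is the on-surface operator with kernel $\partial_{n_x}^2G$ taken in the Hadamard finite-part sense. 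You instead derive only the \emph{difference} of the two one-sided limits, $[\partial_t^2 S\sigma]^+_-=\kappa\sigma$, and then declare that $\cS''[\sigma]$ ``is'' the average of the two one-sided limits. That is a redefinition, not a proof: the lemma (and everything downstream of it, in particular the cancellation of leading singularities in $\cD'+\cS''$ and the kernel of \eqref{eq:lpieI}) uses $\cS''$ as the finite-part integral operator, and the identification of that finite-part value with the symmetric average of the off-surface limits is exactly the nontrivial local analysis that the citation to \cite{kolm_quadruple_2003} supplies. Knowing the jump alone pins down the two one-sided limits only up to a common unknown operator; without the average-equals-finite-part identification you cannot conclude that the interior limit is $\cS''[\sigma]-\tfrac{\kappa}{2}\sigma$.

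A second, related soft spot: to extract the jump of $\partial_t^2S[\sigma]$ you assume that $\Delta_{\Gamma_t}S[\sigma]$ has equal limits from the two sides (``continuity of $\lb S[\sigma]$''). This is not an immediate consequence of the continuity of $S[\sigma]$: it requires interchanging two tangential derivatives with the one-sided limits, i.e.\ convergence of the traces in a topology strong enough to differentiate twice along the curve, which is a statement of the same nature as the first equality in the lemma you are trying to prove. So as written the argument is circular at that point, or at least rests on an unproved claim of comparable difficulty. To repair the proof along your lines you would either have to prove the symmetric-average characterization of the finite part (essentially reproving the relevant result of \cite{kolm_quadruple_2003} by a local kernel expansion near the diagonal) and justify the two-sided continuity of the tangential second derivative, or simply do what the paper does and quote the one-sided limit for $\cS''$ directly, after which the $\pm\tfrac{\kappa}{2}\sigma$ terms from \eqref{eq:S'jump} and the $\cS''$ limit cancel exactly as you describe.
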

\begin{proof}
    The Laplacian can be decomposed as\begin{equation}\label{eq:lapdecomp}
        \Delta = \lb+\kappa\partial_n+\partial_n^2.
    \end{equation}
    Proof of this fact can be found, for example, in \cite{nedelec_acoustic_2001}. Since $S[\sigma]$ satisfies the Helmholtz equation in $\mathbb{R}^2\backslash\Gamma$, 
    \begin{equation}
        \lb \cS_{\tosp{\Gamma}{\Gamma_h}}[\sigma] = -k^2 \cS_{\tosp{\Gamma}{\Gamma_h}}[\sigma] -\kappa_h \cS'_{\tosp{\Gamma}{\Gamma_h}}[\sigma] - \cS''_{\tosp{\Gamma}{\Gamma_h}} [\sigma] \; ,
    \end{equation}
    where $\kappa_h$ denotes the curvature on $\Gamma_h$. The operator $S$ is continuous across $\Gamma$ in the sense of \eqref{eq:Scont}. The jump of $\cS'$ is given in \eqref{eq:S'jump}. The following formula for the jump of $\cS''$ is derived in \cite{kolm_quadruple_2003}:
    \begin{equation}
        \lim_{h\to0}\cS''_{\tosp{\Gamma}{\Gamma_h}}[\sigma] = -\frac{\kappa}{2}\sigma +\cS''[\sigma].
    \end{equation}
    Thus the jump terms of $S'$ and $S''$ cancel, yielding the claim.
\end{proof}
The following lemma gives a similar result for the double layer potential.
\begin{lemma}\label{lem:lbDlim} Suppose that $\Gamma$ is a smooth curve and $\sigma \in H^2(\Gamma)$.
Then
\begin{equation} \lim_{h\to0} \lb \cD_{\tosp{\Gamma}{\Gamma_h}}[\sigma] = \lb \lim_{h\to0}\cD_{\tosp{\Gamma}{\Gamma_h}} [\sigma] = -\frac{1}{2}\lb \sigma +K [\sigma], \end{equation}
where the operator
\begin{equation}
    K = -k^2\cD-\kappa \cD'-\cD''
\end{equation}
has logarithmically singular kernel.
\end{lemma}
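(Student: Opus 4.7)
The strategy mirrors the proof of \Cref{lem:lbSlim}, the new feature being that the double layer itself has a non-trivial jump across $\Gamma$. Applying the decomposition \eqref{eq:lapdecomp} to $\Psi = D[\sigma]$, which satisfies the Helmholtz equation off $\Gamma$, on the curve $\Gamma_h$ gives
\begin{equation*}
\lb \cD_{\tosp{\Gamma}{\Gamma_h}}[\sigma] = -k^2 \cD_{\tosp{\Gamma}{\Gamma_h}}[\sigma] - \kappa_h \cD'_{\tosp{\Gamma}{\Gamma_h}}[\sigma] - \cD''_{\tosp{\Gamma}{\Gamma_h}}[\sigma],
\end{equation*}
with $\kappa_h$ the curvature of $\Gamma_h$. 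The lemma then reduces to computing the three $h\to 0$ limits on the right and collecting them.

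The first two limits come directly from the jump relations: by \eqref{eq:Djump}, $\cD_{\tosp{\Gamma}{\Gamma_h}}[\sigma] \to -\tfrac{1}{2}\sigma + \cD[\sigma]$ (since $\Gamma_h$ sits on the $-n$ side of $\Gamma$), and by \eqref{eq:D'cont}, $\cD'_{\tosp{\Gamma}{\Gamma_h}}[\sigma] \to \cD'[\sigma]$. The main technical step is identifying $\lim_{h\to 0} \cD''_{\tosp{\Gamma}{\Gamma_h}}[\sigma]$, which plays the role of the $\cS''$ jump in \Cref{lem:lbSlim}. I would derive it directly from the PDE: writing $\partial_n^2 D = -k^2 D - \lb D - \kappa \partial_n D$ on each side of $\Gamma$, and using that $D$ has jump $\sigma$ across $\Gamma$ while $\partial_n D$ is continuous, the two-sided jump of $\partial_n^2 D$ equals $-(k^2+\lb)\sigma$. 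Interpreting $\cD''$ in the Hadamard finite-part sense as the mean of the two side limits then yields $\lim_{h\to 0} \cD''_{\tosp{\Gamma}{\Gamma_h}}[\sigma] = \cD''[\sigma] + \tfrac{1}{2}(k^2+\lb)\sigma$.

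Substituting the three limits back into the displayed equation, the $+\tfrac{1}{2}k^2\sigma$ arising from the $\cD$ jump cancels against the $+\tfrac{1}{2}k^2\sigma$ in the $\cD''$ jump, leaving exactly $-\tfrac{1}{2}\lb\sigma + K[\sigma]$. For the second equality in the statement, the same averaging argument applied to $\lb D|_\pm = -k^2 D|_\pm - \kappa \partial_n D|_\pm - \partial_n^2 D|_\pm$ on the two sides of $\Gamma$ produces the standalone identity $\lb \cD[\sigma] = K[\sigma]$, whence $\lb \lim_{h\to 0} \cD_{\tosp{\Gamma}{\Gamma_h}}[\sigma] = \lb(-\tfrac{1}{2}\sigma + \cD[\sigma]) = -\tfrac{1}{2}\lb\sigma + K[\sigma]$ matches the first expression. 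The main obstacle is making the $\cD''$-jump identification rigorous: $\partial_n^2 D|_\pm$ is a finite-part object, and one must verify that its mean really does coincide with $\cD''$ in the sense of \cite{kolm_quadruple_2003}, and that each of the operators involved maps $H^2(\Gamma)$ into $L^2(\Gamma)$ so that all of the limits above live in the appropriate space; given this, commutation of $\lb$ with the $h \to 0$ limit is automatic from the regularity hypotheses.
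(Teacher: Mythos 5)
Your skeleton is the same as the paper's: apply the decomposition \eqref{eq:lapdecomp} to $D[\sigma]$ on $\Gamma_h$, use \eqref{eq:Djump} for $\cD_{\tosp{\Gamma}{\Gamma_h}}$ and \eqref{eq:D'cont} for $\cD'_{\tosp{\Gamma}{\Gamma_h}}$, and reduce everything to the one-sided limit of $\cD''_{\tosp{\Gamma}{\Gamma_h}}$; your claimed limit $\cD''[\sigma]+\tfrac12(k^2+\lb)\sigma$ and the resulting cancellation of the $\tfrac12 k^2\sigma$ terms are also correct. The genuine gap is exactly at the step you flag and do not resolve: you infer the one-sided limit by combining the two-sided jump $-(k^2+\lb)\sigma$ of $\partial_n^2 D[\sigma]$ with the assertion that the Hadamard finite-part operator $\cD''$ is the \emph{mean} of the two one-sided limits. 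That mean-value property is not a formal consequence of the definition of the finite part, and the two-sided jump alone cannot determine either one-sided limit without it; it is the analytic heart of the lemma, so invoking it makes the argument essentially circular. (A secondary instance of the same circularity: taking the jump of $\lb D[\sigma]$ to be $\lb\sigma$ requires commuting $\lb$ with the boundary limit, which is part of what the lemma asserts; the paper justifies the on-surface identity separately, by dominated convergence using the kernel structure $\mu(s)+s^2\log(s)\nu(s)$.)

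The paper's proof is, in effect, a proof of precisely the missing fact. It expands the Hankel function as in \eqref{eq:hank_exp} and splits $\cD''_{\tosp{\Gamma}{\Gamma_h}}$ into the Laplace quadruple-layer part $(\cD_0'')_{\tosp{\Gamma}{\Gamma_h}}$, whose one-sided limit $\tfrac12\lb\sigma+\cD_0''\sigma$ is taken from \cite{kolm_quadruple_2003}, plus $\tfrac{k^2}{8\pi}T''_{\tosp{\Gamma}{\Gamma_h}}$ with kernel $\partial_{n_x}^2\partial_{n_y}\,r^2\log r$, whose jump $4\pi\sigma$ is computed by an explicit local calculation, plus a continuous remainder. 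These two explicit jump computations are what produce your $\tfrac12\lb\sigma$ and $\tfrac12 k^2\sigma$. The paper's remark that its limit for the Laplace part differs from the constant stated in \cite{kolm_quadruple_2003} by a factor of $4$ underlines that such constants must be verified by a kernel-level computation rather than assumed; to complete your proof you would need to carry out an equivalent local analysis (or cite a result giving the one-sided jump relation for the hypersingular operator with two normal derivatives on the target), at which point you would have reproduced the paper's argument.
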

\begin{proof}
    The proof is given in Appendix~\ref{sec:appA}.
\end{proof}

\subsection{An integral representation on $\Gamma_*^j$}
\label{sec:vtbcrep}
Consider a single curve, $\Gamma_*^j$, on which we impose the acoustic boundary layer condition~\cref{eq:vtbc}.
To obtain a Fredholm second kind formulation of this problem, we use a particular linear combination of 
$S$ and $D$ constructed to cancel singularities, reducing the problem to an integro-differential equation on the boundary $\Gamma_*^j$. The resulting integro-differential equation is of a convenient form that lends itself easily to preconditioning.

Substituting the representation $u=S[\sigma]$ into the boundary condition on $\Gamma_*^j$ and using Lemma \ref{lem:lbSlim} gives the expression
\begin{equation}\label{eq:srep} c_1\lb \cS[\sigma] + c_2\cS[\sigma] + \left(\frac{I}{2} + \cS'\right)[\sigma]. \end{equation}
The operator $\lb \cS$ is hypersingular. On the other hand, if we use the representation $u=D[\sigma]$ then Lemma \ref{lem:lbDlim} yields
\begin{equation}\label{eq:drep} c_1\lb\left(-\frac{I}{2}+\cD\right)[\sigma] + c_2\left(-\frac{I}{2}+\cD\right)[\sigma] + \cD'[\sigma]. \end{equation}
Though the operator $\cD'$ is also hypersingular, we note that the leading order singularities of $\cD'$ and $\cS''$ in the sum $\cD'+\cS''$ cancel, and what remains is a continuous kernel. Applying Lemma \ref{lem:lbSlim}, we find that
\begin{equation}
\cD'-\lb \cS = (\cD'+\cS'')+\kappa \cS'+k^2 \cS \; .
\end{equation}
Thus, taking the representation 
\begin{equation} u=\left(\cD-\frac{1}{c_1}\cS\right)[\sigma]\end{equation}
cancels the leading order singularities of $\lb \cS$ and $\cD'$ from \eqref{eq:srep} and \eqref{eq:drep}. Substituting this into the boundary conditions yields, after some manipulations, the integro-differential equation
\begin{align}\label{eq:int_diff} f =\Bigg[-\frac{c_1}{2}\bigg(\lb&+\frac{1+c_1c_2}{c_1^2}I\bigg)+(c_1\lb+c_2I)\cD+c_2\left(-\frac{I}{2}+\cD\right)\\
&\nonumber+(\cD'+\cS'')+k^2\cS+\kappa \cS'-\frac{c_2}{c_1}\cS-\frac{1}{c_1}\left(\frac{I}{2}+\cS'\right)\Bigg][\sigma]. \end{align}

Next, we define the \textit{surface wave number} on $\Gamma_*$ as
\begin{equation} \k = \sqrt{\frac{1+c_1c_2}{c_1^2}}, \end{equation}
with the branch of the square root taken such that $\Im \k > 0$ (strict positivity is
guaranteed by the form of $c_1$ and $c_2$) and we define a Helmholtz--Beltrami operator $\mathcal{L}$ on $\Gamma_*$ by
\begin{equation} \L = \lb+\k^2I. \end{equation}
We note that on a curve, the Laplace-Beltrami operator is the same as the second derivative with respect to arc length. In the following we let $\mathcal{L}_j$ denote the restriction of $\mathcal{L}$ to $\Gamma_*^j$. The surface Green's function associated with $\L_j$ is, therefore,
\begin{equation}\label{eq:surf_greens} \gj(s,s') = \frac{1}{2ik_\Gamma}\mathrm{e}^{ik_\Gamma|s-s'|}+\psi_j(s,s'), \end{equation}
where $s,s'$ denote arc length, and $\psi_j$ is smooth in $s$ and $s'$ separately and satisfies
\begin{equation}
    \L_j\psi_j(s,s_0) = \L_j\psi_j(s_0,s') = 0,
\end{equation}
for every $s_0\in[0,L]$, where $L$ is the length of $\Gamma_*^j$. Let
\begin{equation}
    \G_j[\sigma](s) = \int_{\Gamma_*^j} \gj(s,s')\sigma(s')\,{\rm d}s' \; .
\end{equation}
Standard ODE results give that $\G_j:L^2(\Gamma_*^j)\to H^2(\Gamma_*^j)$ is bounded.

\subsubsection{Closed $\Gamma_*^j$}

Suppose now that $\Gamma_*^j$ is a smooth, closed curve. In this case, the periodic Green's function is desired, so in \eqref{eq:surf_greens} we take
\begin{equation} 
    \psi_j(s,s') = \frac{1}{2i\k}\sum_{\ell\neq0} \ev^{i\k|s-s'-\ell L|}.  
\end{equation}
For $\Im\k>0$, as is the case for the physical values of $c_1$ and $c_2$ in \eqref{eq:c1} and \eqref{eq:c2}, the above series is absolutely convergent. For $\varphi \in H^2(\Gamma_*^j),$ integration by parts on $\Gamma_*^j$ yields
\begin{equation}\label{eq:ibpI}
    \G_j\L_j\varphi = \varphi.
\end{equation}
Noting that
\begin{equation}\label{eq:hbopid}
    c_1\lb+c_2I=c_1\L-c_1^{-1}I,
\end{equation} 
from \eqref{eq:int_diff} one obtains
\begin{align}\label{eq:lpieI}
    \G_j f = \Bigg[-\frac{c_1}{2}I+c_1 \cD&-\frac{1}{c_1}\G_j \cD+\G_j(\cD'+\cS'')+\left(k^2-\frac{c_2}{c_1}\right)\G_j \cS\\
    &\hspace{120pt}+\G_j\left(\kappa-\frac{1}{c_1}\right) \cS'\Bigg]\sigma.\nonumber
\end{align}
The integral operator in the above equation is Fredholm second-kind on $H^2(\Gamma_*^j)$.

\subsubsection{Open $\Gamma_*^j$}
\label{sec:vtbcrepopen}
To obtain an integral equation on an open $\Gamma_*^j$, we proceed as in the previous section, except that further work is required to address the additional boundary conditions.
We define
\begin{align}
    F_j[\varphi](s) = \gj(s,L)\varphi'(L) &- \gj(s,0)\varphi'(0).
\end{align}
\begin{lemma}\label{lem:Fj_cmpct}
    The operator $F_j$ is compact on $H^2(\Gamma_*^j)$.
\end{lemma}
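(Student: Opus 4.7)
The plan is to exploit the fact that $F_j$ is finite-rank: from its very definition,
\begin{equation*}
    F_j[\varphi] = \varphi'(L)\, \gj(\cdot, L) \,-\, \varphi'(0)\, \gj(\cdot, 0),
\end{equation*}
so the range of $F_j$ is contained in the (at most) two-dimensional subspace of $H^2(\Gamma_*^j)$ spanned by $\gj(\cdot, 0)$ and $\gj(\cdot, L)$. Since any bounded operator with finite-dimensional range is automatically compact, the plan reduces to factoring $F_j$ as a composition of two bounded maps $H^2(\Gamma_*^j) \to \C^2 \to H^2(\Gamma_*^j)$, namely $\varphi \mapsto (\varphi'(L),-\varphi'(0))$ followed by $(a,b)\mapsto a\,\gj(\cdot,L)+b\,\gj(\cdot,0)$.

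For the first factor, I would invoke the one-dimensional Sobolev embedding $H^2(\Gamma_*^j) \hookrightarrow C^1(\overline{\Gamma_*^j})$, which immediately yields the boundedness of the evaluation functionals $\varphi \mapsto \varphi'(0)$ and $\varphi \mapsto \varphi'(L)$ on $H^2(\Gamma_*^j)$. For the second factor, I need to check that $\gj(\cdot, 0)$ and $\gj(\cdot, L)$ actually lie in $H^2(\Gamma_*^j)$. Using the decomposition \eqref{eq:surf_greens}, on the interval $[0, L]$ the expression $|s - 0|$ equals $s$ and $|s - L|$ equals $L - s$, both smooth functions of $s$, so the exponential piece $\tfrac{1}{2i\k}\ev^{i\k|s - s'|}$ is smooth in $s$ for $s' \in \{0, L\}$. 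Combined with the stated smoothness of $\psi_j$ in each variable separately, this gives $\gj(\cdot, 0), \gj(\cdot, L) \in C^\infty([0,L]) \subset H^2(\Gamma_*^j)$.

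The only mild subtlety in the whole argument is precisely this last regularity check: the Green's function $\gj(s, s')$ generically has a kink in $s$ at $s = s'$ and so is not $H^2$ in $s$ for interior values of $s'$, but placing $s'$ at an endpoint of $[0, L]$ moves the kink out of the interior, where it is no longer seen as a singularity on the interval. With the two factors bounded, $F_j$ is a bounded operator with two-dimensional range, and compactness on $H^2(\Gamma_*^j)$ follows immediately.
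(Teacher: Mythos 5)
Your argument is essentially identical to the paper's: both prove compactness by noting that the evaluation functionals $\varphi\mapsto\varphi'(0),\varphi'(L)$ are bounded via the embedding $H^2(\Gamma_*^j)\subset C^1(\Gamma_*^j)$ and that the range of $F_j$ is the (at most) two-dimensional subspace spanned by $\gj(\cdot,0),\gj(\cdot,L)\in C^\infty(\Gamma_*^j)\subset H^2(\Gamma_*^j)$, so $F_j$ is bounded and finite-rank, hence compact. Your extra remark that the kink of the Green's function sits at the endpoints and thus causes no loss of regularity is a correct elaboration of the smoothness claim the paper states without comment.
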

\begin{proof}
    Since the embedding $H^2(\Gamma_*^j)\subset C^1(\Gamma_*^j)$ is continuous \cite{aubin_nonlinear_1982}, the functionals 
    \begin{align}
        \varphi&\mapsto \varphi'(0),\\
        \varphi&\mapsto\varphi'(L)
    \end{align} 
    are bounded on $H^2(\Gamma_*^j)$. The range of $F_j$ is a two-dimensional subspace of $C^\infty(\Gamma_*^j)$, and in particular of $H^2(\Gamma_*^j)$. Hence $F_j$ is a bounded operator on $H^2(\Gamma_*^j)$ with finite-dimensional range, so it is compact.
\end{proof}
We select $\psi_j$ in \eqref{eq:surf_greens} such that $\gj$ is the Green's function with zero Neumann boundary conditions, i.e. $\partial_{s'}\gj(s,L)$ and $\partial_{s'}\gj(s,0)$ are identically zero. 
Integration by parts on $\Gamma_*^j$ then yields, for $\varphi\in H^2(\Gamma_*^j)$, 
\begin{equation}\label{eq:ibpII}
    \G_j\L_j\varphi = (I+F_j)\varphi.
\end{equation}
Let $f_j$ and $\sigma_j$ denote the restrictions of $f$ and $\sigma$ to $\Gamma_*^j$. Using \eqref{eq:hbopid}, from \eqref{eq:int_diff} one obtains a non-periodic analogue of \eqref{eq:lpieI}:
\begin{align}\label{eq:lpieII}
    \G_j f_j = \Bigg[-\frac{c_1}{2}I+c_1\cD&-\frac{1}{c_1}\G_j \cD+\G_j(\cD'+\cS'')+\left(k^2-\frac{c_2}{c_1}\right)\G_j \cS\\
    &\hspace{20pt}+\G_j\left(\kappa-\frac{1}{c_1}\right) \cS' + c_1F_j\left(-\frac{I}{2}+\cD\right)\Bigg]\sigma.\nonumber
\end{align}
The operator 
\begin{equation}
    \mathcal{R} = \frac{1}{c_1}\cD+(\cD'+\cS'')+\left(k^2-\frac{c_2}{c_1}\right) \cS+\left(\kappa-\frac{1}{c_1}\right) \cS'
\end{equation}
has logarithmically singular kernel, and is thus compact on $L^2(\Gamma_*^j)$. The embedding 
\begin{equation}
    H^2(\Gamma_*^j)\overset{\iota}{\hookrightarrow} L^2(\Gamma_*^j)
\end{equation}
is compact \cite{aubin_nonlinear_1982,aubin_espaces_1976}, so the composition $\G_j\,\o\,\mathcal{R}\circ\iota$ is compact on $H^2(\Gamma_*^j)$. Since the most singular part of the kernel of $\cD$ is $\mathcal{O}(r^2\log r)$, $\cD$ is compact on $H^2(\Gamma_*^j)$. The operator $F_j$ is compact by Lemma~\ref{lem:Fj_cmpct}, so this proves that the operator applied to $\sigma$ in \eqref{eq:lpieII} is Fredholm second kind on
$H^2(\Gamma_*^j)$. 

The operator has non-trivial cokernel because its range is contained in the range of 
$\G_j$, which only contains functions with zero Neumann data. This deficiency is related to the
binormal boundary conditions, which must still be imposed. 

For this representation, the binormal boundary conditions take the form
\begin{align} 
-\frac{1}{2}\sigma'(L)+\partial_s(\cD\sigma)(L)-\frac{1}{c_1}\partial_s(\cS\sigma)(L) &= h_+,\label{eq:bnbc1} \\
-\frac{1}{2}\sigma'(0)+\partial_s(\cD\sigma)(0)-\frac{1}{c_1}\partial_s(\cS\sigma)(0) &= -h_-.\label{eq:bnbc2}
\end{align}
Unfortunately, the value of $\partial_s(\cS\sigma)$ is not necessarily defined at the end
points. We will see how to modify the definition of $S$ to alleviate this issue in
the next subsection. For now, we proceed formally. 

Multiplying \eqref{eq:bnbc1} by $\gj(s,L)$, multiplying \eqref{eq:bnbc2} by $\gj(s,0)$, and taking the difference of the two yields
\begin{equation}\label{eq:bnbc_diff} 
F_j\left(-\frac{I}{2}+\cD\right)\sigma = \frac{1}{c_1}F_j\cS\sigma + \gj(s,L)h_+ + \gj(s,0)h_-.
\end{equation} 
Since $\gj(s,L)$ and $\gj(s,0)$ are linearly independent, imposing this is equivalent to imposing \eqref{eq:bnbc1} and \eqref{eq:bnbc2}. 
Substituting \eqref{eq:bnbc_diff} back into \eqref{eq:lpieII} yields the following (formal) integral equation:
\begin{multline}\label{eq:lpieII_mod}
    \tilde{f}_j = \left ( -\frac{c_1}{2}I+c_1\cD-\frac{1}{c_1}\G_j \cD+\G_j(\cD'+\cS'') \right.\\
    \left. +\left(k^2-\frac{c_2}{c_1}\right)\G_j \cS +\G_j\left(\kappa-\frac{1}{c_1}\right) \cS' + F_j\cS \right )\sigma \; ,
\end{multline}
where 
\begin{equation} \label{eq:modf} \tilde{f}_j = \G_j f_j - \gj(s,L)h_+ - \gj(s,0)h_-. \end{equation}
As mentioned above, when enforcing the binormal boundary condition the operator $\cS$ does not map $H^2(\Gamma_*^j)$ to itself when $\Gamma_*^j$ is an open curve. This issue is handled by 
an extension of the method of images described in the next subsection.

\subsection{Mixed boundary conditions and a method of images approach}
\label{sec:mixedbcrep}

Before describing the image methods, we consider a provisional representation for the mixed-boundary problem on all of $\Gamma$. We introduce the notation $S_{*_i}$ for the layer potential $S$ with density on $\Gamma_*^i$ and $S_{\o_i}$ for the layer potential $S$ with density on $\Gamma_\o^i$, and will use an analogous notation for $D$. We also let $\cS_{\tosp{*_i}{\o_j}}$ denote the restriction of the operator $S_{*_i}$ to $\Gamma_\o^j$, and similarly for the other boundary components and operators, with $\cS_{*_i}:=\cS_{\tosp{*_i}{*_i}}$, etc. used for the sake of brevity. For ease of exposition we consider the problem depicted in Figure \ref{fig:caseii} in which there are two pieces of the boundary with the visco-thermal boundary conditions ($\Gamma_*^{1},\Gamma_*^{2}$) and two pieces with Robin boundary conditions ($\Gamma_\o^{1},\Gamma_\o^{2}$). Then, following the
discussion above, we provisionally represent the solution as
\begin{equation} u=\left(D_{*_1}-\frac{1}{c_1}S_{*_1}\right)\sigma_1+\left(D_{*_2}-\frac{1}{c_1}S_{*_2}\right)\sigma_2+S_{\o_1}\rho_1+S_{\o_2}\rho_2. \end{equation}
The part of this representation acting from $\Gamma_\o^j$ to itself is 
\begin{equation} \frac{1}{2}\rho_j + a\cS_{\o_j}[\rho_j]+\cS_{\o_j}'[\rho_j], \end{equation}
which is Fredholm second kind.

It remains to check the singularity of the integral operators acting between $\Gamma_*$ and $\Gamma_\o$. The representation on $\Gamma_*^i$ evaluated in the Robin boundary condition on $\Gamma_\o^j$ is given by
\begin{equation} \left[a\left(\cD_{\tosp{*_i}{\o_j}}-\frac{1}{c_1}\cS_{\tosp{*_i}{\o_j}}\right)+\cD'_{\tosp{*_i}{\o_j}}-\frac{1}{c_1}\cS'_{\tosp{*_i}{\o_j}}\right]\sigma_i.\end{equation}
The most singular part of the operator above is $\cD'_{\tosp{*_i}{\o_j}}$.

The representation on $\Gamma_\o^j$ evaluated in the visco-thermal boundary condition on $\Gamma_*^i$ is
\begin{equation} \left[c_1(I+F_i)\cS_{\tosp{\o_j}{*_i}}-\frac{1}{c_1}\mathcal{G}_i\cS_{\tosp{\o_j}{*_i}}+\mathcal{G}_i\cS'_{\tosp{\o_j}{*_i}}\right]\rho_j. \end{equation}
The most singular part of this integral operator is $F_i\cS_{\tosp{\o_j}{*_i}}$.

To eliminate these singularities, we use an extension of the method of images. First, we treat the singularity of $\cD'_{\tosp{*_i}{\o_j}}$ at the corner. For each boundary point $x_0\in\del\Gamma_*^i$, we consider the line $\ell$ spanned by the normal vector at that point. For some $R>0$, we then define a curve via reflection of $\Gamma_*^i\cap B_R(x_0)$ across $\ell$, which we call a \emph{fin}.
Next we define a modified double layer potential, $\tilde{D}_{*_i}\sigma_i$, which is obtained by 
extending the values of $\sigma_i$ to the fins, with the values on the fin at each $x_0\in\del\Gamma_*^i$ 
defined by an even symmetric extension of the values of $\sigma_i$ on $\Gamma_*^i\cap B_R(x_0)$,
and evaluating the usual double layer induced by the extended density. An easy consequence of the even symmetry of the extended density, together with the symmetry of the extended domain and the additional assumptions on $\Gamma_\o$ (see \cref{subsec:geomdesc}), is that the kernel of the operator $\tilde{\cD}'_{\tosp{*_i}{\o_j}}$ is continuous away from the corner and bounded for source and target approaching the corner, and so the operator is compact (see \cref{sec:appB}). 
We define $\tilde{\cS}_{*_i}$ by even extension as well, which has the benefit that 
$\partial_s (\tilde{\cS}_{*_i}\sigma_i)$ is defined at the endpoints.

Similarly, the singularity in $F_i\cS_{\tosp{\o_j}{*_i}}$ can be eliminated by extending $\Gamma_\o^j$ by a fin and using an {\em odd} extension
of $\rho_j$ to define $\tilde{S}_{\o_j}\rho_j$. 
A proof of this fact is given in Appendix~\ref{sec:appB}. See \cref{fig:fins} for a zoom-in of the fins near a corner. 

\begin{remark}
    While the image fins above are defined by reflection across the line parallel to
    the normal at the endpoint, it is useful in other settings to define them
    by other reflections or rotations. 
    Similar fins were recently applied to regularize corner singularities that arose in 
    a domain decomposition approach to waveguide problems~\cite{goodwill_fast_2025}. The 
    regularizing effect of including image curves with certain symmetries has also 
    been observed in integral formulations of quasiperiodic scattering problems
    \cite{barnett2011new}.
\end{remark}

\begin{figure}[htbp]
    \centering
    \includegraphics[width=0.5\textwidth]{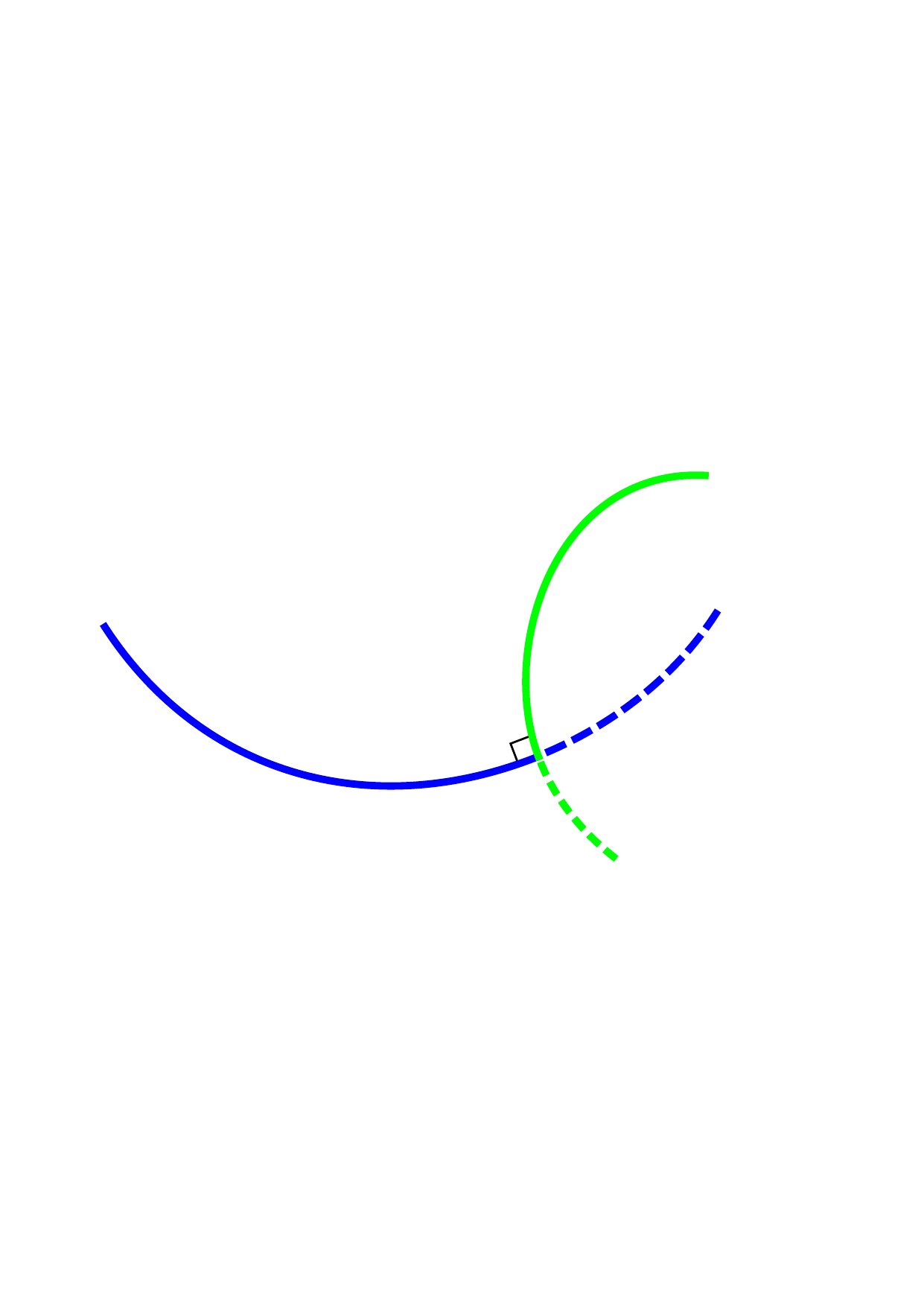}
    \caption{A zoom-in of the fins added to the geometry in the vicinity of a corner.}
    \label{fig:fins}
\end{figure}

We now propose the fin-based representation
\begin{equation} u=\left(\tilde{D}_{*_1}-\frac{1}{c_1}\tilde{S}_{*_1}\right)\sigma_1+\left(\tilde{D}_{*_2}-\frac{1}{c_1}\tilde{S}_{*_2}\right)\sigma_2+\tilde{S}_{\o_1}\rho_1+\tilde{S}_{\o_2}\rho_2 \; , \end{equation}
which utilizes even extensions for the $\Gamma_*^i$ operators and odd extensions
for the $\Gamma_{\o}^i$ operators. We make a substitution as in \eqref{eq:bnbc_diff} and \eqref{eq:lpieII_mod} so that the operator $F_i$ is only applied to $\tilde{S}_{*_i}$, outside of the enforcement of the binormal boundary condition. Define $\mathcal{K}_{\tosp{*_i}{*_i}}$ to be the operator corresponding to the contribution
of $\sigma_i$ to the $\Gamma_*^i$ boundary condition. After preconditioning as in \cref{sec:vtbcrepopen}, we have

\begin{multline}
    \mathcal{K}_{\tosp{*_i}{*_i}}\sigma_i = \left ( -\frac{c_1}{2}I_{*_i}+c_1\tilde{\cD}_{*_i}-\frac{1}{c_1}\G_i \tilde{\cD}_{*_i}+\G_i(\tilde{\cD}'_{*_i}+\tilde{\cS}''_{*_i}) \right. \\
    \left. +\left(k^2-\frac{c_2}{c_1}\right)\G_i \tilde{\cS}_{*_i}+\G_i\left(\kappa-\frac{1}{c_1}\right) \tilde{\cS}'_{*_i} + F_i\tilde{\cS}_{*_i} \right) \sigma_i
\end{multline}
We define the operators of the remaining interactions analogously:
\begin{align}
    \mathcal{K}_{\tosp{*_i}{\o_j}} &= a\left(\tilde{\cD}_{\tosp{*_i}{\o_j}}-\frac{1}{c_1}\tilde{\cS}_{\tosp{*_i}{ \o_j}}\right)+\tilde{\cD}'_{\tosp{*_i}{\o_j}}-\frac{1}{c_1}\tilde{\cS}'_{\tosp{*_i}{\o_j}}, \\
    \mathcal{K}_{\tosp{*_i}{*_{\neg i}}} &= c_1\left(\tilde{\cD}_{\tosp{*_i}{*_{\neg i}}}-\frac{1}{c_1}\tilde{\cS}_{\tosp{*_i}{*_{\neg i}}}\right)-\frac{1}{c_1}\G_{\neg i}\left(\tilde{\cD}_{\tosp{*_i}{*_{\neg i}}}-\frac{1}{c_1}\tilde{\cS}_{\tosp{*_i}{*_{\neg i}}}\right)\\
    &\hspace{50pt}+\G_{\neg i}\left(\tilde{\cD}'_{\tosp{*_i}{*_{\neg i}}}-\frac{1}{c_1}\tilde{\cS}'_{\tosp{*_i}{*_{\neg i}}}\right),\nonumber \\
    \mathcal{K}_{\tosp{\o_j}{\o_j}} &= \frac{1}{2}I_{\o_j}+a\tilde{\cS}_{\o_j}+\tilde{\cS}_{\o_j}', \\
    \mathcal{K}_{\tosp{\o_j}{*_i}} &= c_1\tilde{\cS}_{\tosp{\o_j}{*_i}}-\frac{1}{c_1}\mathcal{G}_i\tilde{\cS}_{\tosp{\o_j}{*_i}}+\mathcal{G}_i\tilde{\cS}'_{\tosp{\o_j}{*_i}}, \\
    \mathcal{K}_{\tosp{\o_j}{\o_{\neg j}}} &= a\tilde{\cS}_{\tosp{\o_j}{\o_{\neg j}}}+\tilde{\cS}'_{\tosp{\o_j}{\o_{\neg j}}}.
\end{align}

Then, the full system is given by
\begin{equation}\label{eq:fullsys} \begin{bmatrix} \mathcal{K}_{\tosp{*_1}{*_1}} & \mathcal{K}_{\tosp{\o_1}{*_1}} & \mathcal{K}_{\tosp{*_2}{*_1}} & \mathcal{K}_{\tosp{\o_2}{*_1}} \\ 
\mathcal{K}_{\tosp{*_1}{\o_1}} & \mathcal{K}_{\tosp{\o_1}{\o_1}} & \mathcal{K}_{\tosp{*_2}{\o_1}} & \mathcal{K}_{\tosp{\o_2}{\o_1}} \\ 
\mathcal{K}_{\tosp{*_1}{*_2}} & \mathcal{K}_{\tosp{\o_1}{*_2}} & \mathcal{K}_{\tosp{*_2}{*_2}} & \mathcal{K}_{\tosp{\o_2}{*_2}} \\ \mathcal{K}_{\tosp{*_1}{\o_2}} & \mathcal{K}_{\tosp{\o_1}{\o_2}} & \mathcal{K}_{\tosp{*_2}{\o_2}} & \mathcal{K}_{\tosp{\o_2}{\o_2}} \end{bmatrix} \begin{bmatrix} \sigma_1 \\ \rho_1 \\ \sigma_2 \\ \rho_2 \end{bmatrix} = \begin{bmatrix} \tilde{f}_1 \\ g_1 \\ \tilde{f}_2 \\ g_2 \end{bmatrix} \; , \end{equation}
where the modified data $\tilde{f}_1,\tilde{f}_2$ are defined as in~\cref{eq:modf}. Denote the operator in \eqref{eq:fullsys} by $\mathcal{M}$. The following theorem states that, up to a bounded operator with bounded inverse, $\mathcal{M}$ is Fredholm second-kind.
\begin{theorem}\label{fullsysfred}
   The operator $\mathcal{M}:H^2(\Gamma_*)\times L^2(\Gamma_\o)\to H^2(\Gamma_*)\times L^2(\Gamma_\o)$ is of the form $A+K$, where $K$ is compact and $A$ is boundedly invertible.
\end{theorem}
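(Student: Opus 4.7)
The plan is to decompose $\mathcal{M}=A+K$, where $A$ is the block-diagonal operator whose $(*_i,*_i)$ block is $-\tfrac{c_1}{2}I_{*_i}$ on $H^2(\Gamma_*^i)$ and whose $(\o_j,\o_j)$ block is $\tfrac{1}{2}I_{\o_j}$ on $L^2(\Gamma_\o^j)$. Since $c_1$ is a nonzero complex constant, $A$ is a nonzero scalar multiple of the identity on each diagonal factor and is therefore boundedly invertible on $H^2(\Gamma_*)\times L^2(\Gamma_\o)$. The theorem then reduces to showing that the residual operator $K \coloneqq \mathcal{M}-A$ is compact on this product space.

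I would treat the diagonal blocks of $K$ first. The operator $\mathcal{K}_{\tosp{*_i}{*_i}}+\tfrac{c_1}{2}I_{*_i}$ is compact on $H^2(\Gamma_*^i)$ by the argument already given at the end of \cref{sec:vtbcrepopen}: each term carrying a $\G_i$ factors as a bounded map into $L^2(\Gamma_*^i)$ precomposed with the compact embedding $\iota:H^2(\Gamma_*^i)\hookrightarrow L^2(\Gamma_*^i)$, the term $c_1\tilde{\cD}_{*_i}$ is compact on $H^2$ by virtue of its $\mathcal{O}(r^2\log r)$ kernel, and $F_i\tilde{\cS}_{*_i}$ is compact by \cref{lem:Fj_cmpct}. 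The operator $\mathcal{K}_{\tosp{\o_j}{\o_j}}-\tfrac{1}{2}I_{\o_j}=a\tilde{\cS}_{\o_j}+\tilde{\cS}'_{\o_j}$ is compact on $L^2(\Gamma_\o^j)$ because $\Gamma_\o^j$ is smooth and the kernels of $\cS$ and $\cS'$ are at worst logarithmically singular there.

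For the off-diagonal blocks I would distinguish two regimes. Whenever the source and target curves share no endpoint, every layer-potential kernel entering the block is jointly smooth, so the block is compact as an integral operator with smooth kernel (and automatically maps into $H^2$ where required). The nontrivial case is the pair of blocks $\mathcal{K}_{\tosp{*_i}{\o_j}}$ and $\mathcal{K}_{\tosp{\o_j}{*_i}}$ when $\Gamma_*^i$ and $\Gamma_\o^j$ meet at a common corner. The dangerous contributions there are $\tilde{\cD}'_{\tosp{*_i}{\o_j}}$ in the former and the $F_i\tilde{\cS}_{\tosp{\o_j}{*_i}}$ piece (arising from $\G_i\L_i=I+F_i$) in the latter. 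The fin construction of \cref{sec:mixedbcrep} is designed precisely to tame these: the even-symmetric extension used for $\tilde{S}_{*_i},\tilde{D}_{*_i}$ renders the kernel of $\tilde{\cD}'_{\tosp{*_i}{\o_j}}$ continuous and bounded up to the shared corner, while the odd-symmetric extension used for $\tilde{S}_{\o_j}$ provides the symmetry needed to control $F_i\tilde{\cS}_{\tosp{\o_j}{*_i}}$. These two facts are the content of \cref{sec:appB} and I would cite them directly, then compose with $\G_i$ and $\iota$ as needed to upgrade boundedness to compactness into the $H^2$ target.

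The main obstacle is exactly the analysis of those two corner-adjacent blocks: absent the fin construction, $\tilde{\cD}'_{\tosp{*_i}{\o_j}}$ has a non-integrable part as source and target approach the corner, and $\partial_s\cS$ appearing in $F_i$ need not even be defined at the endpoint. The right-angle intersection and vanishing-curvature hypotheses of \cref{subsec:geomdesc} are precisely what permit the even and odd symmetrizations to produce the cancellations required in \cref{sec:appB}. Once the four diagonal and all off-diagonal blocks of $K$ are known to be compact between the appropriate Sobolev pairs, compactness of $K$ on $H^2(\Gamma_*)\times L^2(\Gamma_\o)$ follows from the finite block structure, and the decomposition $\mathcal{M}=A+K$ delivers the theorem.
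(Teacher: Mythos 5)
There is a genuine gap, and it is located exactly where your argument is most casual: the claim that \emph{all} off-diagonal blocks are compact. With your block-diagonal choice of $A$, the block $\mathcal{K}_{\tosp{\o_j}{*_i}}$ (the contribution of $\rho_j$ to the visco-thermal condition on an adjacent $\Gamma_*^i$) must land in $K$, but it is only \emph{bounded}, not compact, as a map $L^2(\Gamma_\o^j)\to H^2(\Gamma_*^i)$. The offending term is $\G_i\tilde{\cS}'_{\tosp{\o_j}{*_i}}$: near the shared corner the kernel of $\cS'_{\tosp{\o_j}{*_i}}$ behaves to leading order like the Mellin-type kernel $t/(s^2+t^2)$, which is \emph{odd} in $t$, so the odd-symmetric fin extension of $\rho_j$ does not cancel it — the cancellation established in \cref{sec:appB} concerns the tangential-derivative kernel $\sim s/(s^2+t^2)$ (even in $t$) and therefore tames only the $F_i\tilde{\cS}_{\tosp{\o_j}{*_i}}$ piece. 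For $\tilde{\cS}'_{\tosp{\o_j}{*_i}}$ one gets mere $L^2$-boundedness from \cite{fabes_spectra_1976}. Your proposed fix, ``compose with $\G_i$ and $\iota$ to upgrade boundedness to compactness,'' does not apply here: the source space is $L^2(\Gamma_\o^j)$, so there is no compact embedding available on the incoming side, and $\G_i:L^2(\Gamma_*^i)\to H^2(\Gamma_*^i)$ is not compact into the $H^2$ target (if it were, then $\G_i\L_i=I+F_i$ would be compact on $H^2$, contradicting Lemma~\ref{lem:Fj_cmpct} and the non-compactness of the identity). Hence your residual $K$ contains a non-compact block and the decomposition fails as stated.

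This is precisely the point at which the paper departs from the block-diagonal picture. There, $K$ collects only the genuinely compact blocks — including $\mathcal{K}_{\tosp{*_i}{\o_j}}$, which \emph{is} compact because its source is $H^2(\Gamma_*^i)$ and one can route through the compact embedding $H^2(\Gamma_*^i)\hookrightarrow L^2(\Gamma_*^i)$ before applying the merely bounded $\tilde{\cD}_{\tosp{*_i}{\o_j}}$ — while the bounded-but-not-compact parts of $\mathcal{K}_{\tosp{\o_j}{*_i}}$ are retained in $A$ as off-diagonal entries $A_{ij}:L^2(\Gamma_\o^j)\to H^2(\Gamma_*^i)$. After a block permutation, $A$ is block upper-triangular with invertible scalar diagonal ($-c_1I$ and $I$), and its inverse is written down explicitly; that asymmetry between the two corner-adjacent blocks (one compact, one only bounded) is the essential content of the theorem's proof. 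The rest of your outline (the diagonal blocks, the corner-free off-diagonal blocks, the role of the even fins in controlling $\tilde{\cD}'_{\tosp{*_i}{\o_j}}$) is consistent with the paper, but to close the argument you must either adopt the triangular choice of $A$ or prove compactness of $\G_i\tilde{\cS}'_{\tosp{\o_j}{*_i}}$, which the paper does not claim and which the corner asymptotics above indicate is false.
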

\begin{proof}
    The diagonal operators $\mathcal{K}_{\tosp{*_i}{*_i}}$ and $\mathcal{K}_{\tosp{\o_j}{\o_j}}$ are Fredholm second-kind, and the off-diagonal operators $\mathcal{K}_{\tosp{*_i}{\neg*_i}}$ and $\mathcal{K}_{\tosp{\o_j}{\neg\o_j}}$ are compact, on their respective spaces. The operators 
    \begin{align} 
    \tilde{\cS}'_{\tosp{\o_j}{*_i}}&:L^2(\Gamma_\o^j)\to L^2(\Gamma_*^i),\label{eq:S'corn} \\
    \tilde{\cD}_{\tosp{*_i}{\o_j}}&:L^2(\Gamma_*^i)\to L^2(\Gamma_\o^j)\label{eq:Dcorn} \end{align}
    are bounded \cite{fabes_spectra_1976}. The operator 
    \begin{equation} \G_i:L^2(\Gamma_*^i)\to H^2(\Gamma_*^i) \end{equation} 
    is also bounded, so the composition 
    \begin{equation} \G_i\tilde{\cS}'_{\tosp{\o_j}{*_i}}:L^2(\Gamma_\circ^j)\to H^2(\Gamma_*^i) \end{equation} 
    is bounded. The odd-symmetric fins on $\Gamma_\circ^j$ render 
    \begin{equation} \tilde{\cS}_{\tosp{\o_j}{*_i}}:L^2(\Gamma_\circ^j)\to H^2(\Gamma_*^i) \end{equation} 
    compact, and thus $\mathcal{K}_{\tosp{\o_j}{*_i}}$ is bounded. Since the inclusion 
    \begin{equation} H^2(\Gamma_*^i)\hookrightarrow L^2(\Gamma_*^i) \end{equation} 
    is compact (\cite{aubin_nonlinear_1982,aubin_espaces_1976}) and \eqref{eq:Dcorn} is bounded, the composition \begin{equation} \tilde{\cD}_{\tosp{*_i}{\o_j}}:H^2(\Gamma_*^i)\to L^2(\Gamma_\o^j) \end{equation}
    is compact, and thus so too is $\mathcal{K}_{\tosp{*_i}{\o_j}}$.

    Collect the compact operators in a 4x4 matrix of operators, denoted $K$, and let $A=2(\mathcal{M}-K)$. Then 
    \begin{equation} A = \begin{bmatrix} -c_1 I_{\tosp{*_1}{*_1}} & A_{11} && A_{12} \\
    & I_{\tosp{\o_1}{\o_1}} && \\
    & A_{21} & -c_1I_{\tosp{*_2}{*_2}} &  A_{22} \\
    &&&  I_{\tosp{\o_2}{\o_2}}\end{bmatrix},\end{equation}
    where the operators $A_{ij}:L^2(\Gamma_\o^j)\to H^2(\Gamma_*^i)$ are bounded. We then have
    \begin{equation}
        \Pi^{\tp} A\Pi = \begin{bmatrix} -c_1 I_{\tosp{*_1}{*_1}} && A_{11} & A_{12} \\
    & -c_1I_{\tosp{*_2}{*_2}} &  A_{21} &  A_{22} \\
    && I_{\tosp{\o_1}{\o_1}} &  \\
    &&&  I_{\tosp{\o_2}{\o_2}}\end{bmatrix},
    \end{equation}
    where $\Pi$ is a block permutation. The inverse of this operator is obtained by negating the strictly upper triangular part and multiplying by the appropriate diagonal matrix.
\end{proof}
At this point it is worth noting the following special case. Suppose that $f$ and $h$ are identically zero, as is the case in many problems of interest. In this case, using Schur complements, we reduce \eqref{eq:fullsys} to a second-kind integral equation solely on $\Gamma_\o$. Let
\begin{align}
    \mathcal{K}_{\tosp{\o}{*_i}} &= \begin{bmatrix} \mathcal{K}_{\tosp{\o_1}{*_i}} & \mathcal{K}_{\tosp{\o_2}{*_i}} \end{bmatrix}, \\
    \mathcal{K}_{\tosp{*}{\o_i}} &= \begin{bmatrix} \mathcal{K}_{\tosp{*_1}{\o_i}} & \mathcal{K}_{\tosp{*_2}{\o_i}} \end{bmatrix}, \\
    \mathcal{K}_{\tosp{*_i}{\o}} &= \begin{bmatrix} \mathcal{K}_{\tosp{*_i}{\o_1}} \\ \mathcal{K}_{\tosp{*_i}{\o_2}} \end{bmatrix}, \\
    \mathcal{K}_{\tosp{\o_i}{*}} &= \begin{bmatrix} \mathcal{K}_{\tosp{\o_i}{*_1}} \\ \mathcal{K}_{\tosp{\o_i}{*_2}} \end{bmatrix}, \\
    \mathcal{K}_{\o} &= \begin{bmatrix}
\mathcal{K}_{\tosp{\o_1}{\o_1}} & \mathcal{K}_{\tosp{\o_2}{\o_1}} \\ 
\mathcal{K}_{\tosp{\o_1}{\o_2}}& \mathcal{K}_{\tosp{\o_2}{\o_2}} \end{bmatrix}.
\end{align}
Assuming that $\mathcal{K}_{\tosp{*_i}{*_i}}$ is invertible, we define
\begin{equation}
    \mathcal{A}_i = \mathcal{K}_{\tosp{*_i}{*_{\neg i}}}\mathcal{K}_{\tosp{*_i}{*_i}}^{-1}, \\
\end{equation}
Since $\mathcal{K}_{\tosp{*_i}{*_{\neg i}}}$ is compact, $\mathcal{K}_{\tosp{*_i}{*_i}}-\mathcal{A}_{\neg i}\mathcal{K}_{\tosp{*_i}{*_{\neg i}}}$ is also Fredholm second-kind. Suppose that it is invertible as well. We then define the operator
\begin{equation}
    \mathcal{B}_i = (\mathcal{K}_{\tosp{*_i}{*_i}}-\mathcal{A}_{\neg i}\mathcal{K}_{\tosp{*_i}{*_{\neg i}}})^{-1}(\mathcal{A}_{\neg i}\mathcal{K}_{\tosp{\o}{*_{\neg i}}}-\mathcal{K}_{\tosp{\o}{*_i}}).
\end{equation}
By taking Schur complements, \eqref{eq:fullsys} is equivalent to solving the reduced system
\begin{equation}\label{eq:reducsys}
    (\mathcal{K}_{\o}+\mathcal{K}_{\tosp{*_i}{\o}}\mathcal{B}_1+\mathcal{K}_{\tosp{*_2}{\o}}\mathcal{B}_2)\rho = g.
\end{equation}
Note that $\mathcal{B}_i:\rho\mapsto\sigma_i$. Let the operator in \eqref{eq:reducsys} be denoted by $\mathcal{M}_{\o,r}$.
\begin{corollary}\label{reducsysfred}
    $\mathcal{M}_{\o,r}:L^2(\Gamma_\o)\to L^2(\Gamma_\o)$ is Fredholm second-kind.
\end{corollary}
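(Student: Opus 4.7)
The plan is to exhibit $\mathcal{M}_{\o,r}$ as a compact perturbation of the block operator $\mathcal{K}_\o$ on $L^2(\Gamma_\o)$, and then verify that $\mathcal{K}_\o$ is itself already Fredholm second-kind, so that their sum is as well.

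First I would confirm that $\mathcal{K}_\o:L^2(\Gamma_\o)\to L^2(\Gamma_\o)$ is Fredholm second-kind as a block operator. Each diagonal block $\mathcal{K}_{\tosp{\o_j}{\o_j}}=\tfrac{1}{2}I_{\o_j}+a\tilde{\cS}_{\o_j}+\tilde{\cS}'_{\o_j}$ is Fredholm second-kind, because $\Gamma_\o^j$ is smooth and so the classical single-layer and normal-derivative single-layer operators on it are compact on $L^2(\Gamma_\o^j)$. The off-diagonal blocks $\mathcal{K}_{\tosp{\o_j}{\o_{\neg j}}}=a\tilde{\cS}_{\tosp{\o_j}{\o_{\neg j}}}+\tilde{\cS}'_{\tosp{\o_j}{\o_{\neg j}}}$ act between the disjoint smooth curves $\Gamma_\o^1$ and $\Gamma_\o^2$, have smooth kernels, and are therefore compact.

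Next I would track the mapping properties through the Schur complement in the definition of $\mathcal{B}_i$. From Theorem~\ref{fullsysfred} and the standing invertibility assumption, $\mathcal{K}_{\tosp{*_i}{*_i}}^{-1}$ is bounded on $H^2(\Gamma_*^i)$, and since $\mathcal{K}_{\tosp{*_i}{*_{\neg i}}}:H^2(\Gamma_*^i)\to H^2(\Gamma_*^{\neg i})$ is compact (established in the proof of Theorem~\ref{fullsysfred}), $\mathcal{A}_i$ is bounded $H^2(\Gamma_*^i)\to H^2(\Gamma_*^{\neg i})$, and $(\mathcal{K}_{\tosp{*_i}{*_i}}-\mathcal{A}_{\neg i}\mathcal{K}_{\tosp{*_i}{*_{\neg i}}})^{-1}$ is bounded on $H^2(\Gamma_*^i)$. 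Combined with the boundedness of $\mathcal{K}_{\tosp{\o}{*_i}}:L^2(\Gamma_\o)\to H^2(\Gamma_*^i)$ (also from Theorem~\ref{fullsysfred}), this gives that $\mathcal{B}_i:L^2(\Gamma_\o)\to H^2(\Gamma_*^i)$ is bounded.

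Finally, Theorem~\ref{fullsysfred} shows that $\mathcal{K}_{\tosp{*_i}{\o}}:H^2(\Gamma_*^i)\to L^2(\Gamma_\o)$ is compact (the inclusion $H^2(\Gamma_*^i)\hookrightarrow L^2(\Gamma_*^i)$ is compact and $\mathcal{K}_{\tosp{*_i}{\o_j}}$ is bounded from $L^2$ to $L^2$). Composition of a compact operator with a bounded one is compact, so each $\mathcal{K}_{\tosp{*_i}{\o}}\mathcal{B}_i$ is compact on $L^2(\Gamma_\o)$. Therefore $\mathcal{M}_{\o,r}=\mathcal{K}_\o+\mathcal{K}_{\tosp{*_1}{\o}}\mathcal{B}_1+\mathcal{K}_{\tosp{*_2}{\o}}\mathcal{B}_2$ is the sum of a Fredholm second-kind operator and a compact operator, hence is itself Fredholm second-kind. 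The only real subtlety is the bookkeeping of the different function spaces ($H^2$ on $\Gamma_*$, $L^2$ on $\Gamma_\o$) in each factor of the Schur complement, but all the requisite mapping properties were already extracted in the proof of Theorem~\ref{fullsysfred}, so no new analytic work is needed.
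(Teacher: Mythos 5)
Your proposal is correct and follows essentially the same route as the paper: write $\mathcal{M}_{\o,r}$ as $\mathcal{K}_\o$ plus the Schur-complement corrections $\mathcal{K}_{\tosp{*_i}{\o}}\mathcal{B}_i$, use the boundedness of $\mathcal{K}_{\tosp{\o}{*_i}}$ and compactness of $\mathcal{K}_{\tosp{*_i}{\o}}$ from Theorem~\ref{fullsysfred} (together with the invertibility assumptions) to get $\mathcal{B}_i$ bounded and the corrections compact, and conclude since $\mathcal{K}_\o$ is Fredholm second-kind. Your version simply spells out the space bookkeeping and the second-kind structure of $\mathcal{K}_\o$ in more detail than the paper does.
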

\begin{proof}
    In the proof of Theorem \ref{fullsysfred}, it was already shown that $\mathcal{K}_{\tosp{\o}{*_i}}$ is bounded and $\mathcal{K}_{\tosp{*_i}{\o}}$ is compact. Hence $\mathcal{B}_i$ is bounded, and the composition $\mathcal{K}_{\tosp{*_i}{\o}}\mathcal{B}_i$ is compact. Since $\mathcal{K}_\o$ is Fredholm second-kind, the statement follows.
\end{proof}

\subsection{Domain decomposition and impedance-to-impedance}\label{subsec:dd}

For complicated domains with multiscale features or modular components, it is often advantageous to divide the domain into multiple regions. Given a solver for each individual region, there are several methods of coupling them together, thus obtaining a solution to the full problem. Here, we briefly outline the use of impedance-to-impedance maps (see, e.g., ~\cite{gillman2015spectrally}) to enforce continuity of the solution and its normal derivative across each artificial boundary.

In the previous analysis, the solution $u$ for Case II is assumed to satisfy a general Robin boundary condition on the portion of the boundary denoted by $\Gamma_\o$, see \eqref{eq:pde}. As special cases, the solution is said to satisfy an incoming impedance condition if
\begin{equation}\label{eq:incimp} iku+\del_nu=g, \end{equation}
and an outgoing impedance condition if
\begin{equation}\label{eq:outimp} -iku+\del_nu=g. \end{equation}
Let $\mathcal{M}^+$ (resp. $\mathcal{M}^-$) denote the operator in \eqref{eq:fullsys} with $a=ik$ (resp. $a=-ik$). Similarly define $\mathcal{M}_{\o,r}^\pm$ for the reduced version of $\mathcal{M}^\pm$, given in \eqref{eq:reducsys}. Then the impedance-to-impedance maps are given by 
\begin{align}
    \mathcal{I}_{\tosp{+}{-}} &= \mathcal{M}_{\o,r}^-(\mathcal{M}_{\o,r}^+)^{-1}, \\
    \mathcal{I}_{\tosp{-}{+}} &= \mathcal{M}_{\o,r}^+(\mathcal{M}_{\o,r}^-)^{-1},
\end{align} 
provided that each of $\mathcal{M}_{\o,r}^\pm$ are invertible. Since invertible Fredholm second-kind operators form a group under composition, the impedance-to-impedance maps are Fredholm second-kind operators when they are defined. Once one has access to the impedance-to-impedance maps, by matching incoming and outgoing impedance data on each side of $\Gamma_\o$, continuity of the solution and its normal derivative is enforced. 

\begin{remark}
    Although the impedance-to-impedance maps are Fredholm second-kind, the system obtained by imposing continuity of the incoming and outgoing impedance data has a degeneracy in the identity part, making it only compact. However, empirically, we observe that the condition number of the system does not exceed $10^4$ for our discretizations.
\end{remark}

\section{Numerics}

In the figures below, as representative values we take boundary layer thicknesses $\delta_V=\delta_T=\frac{1}{160}$, specific heat ratio $\gamma=1.4$, and wavelength $\lambda=1.1$. 

For our simulations we use the chunkIE software package \cite{askham_chunkie_2024}. The package discretizes the boundary of a domain via curved panels of Gauss-Legendre nodes.
We use 16th order panels in the calculations below, and the panel lengths are selected to be less than $\lambda/4$. Additionally, for geometries with corners, we 
use dyadically refined panels near the corner points, to a depth of seven levels. For well-separated panels, the classical Gauss-Legendre quadrature is used. For adjacent panels and for the interaction of a panel with itself, adaptive integration and generalized Gauss-Legendre quadratures are used, see \cite{ma_generalized_1996,bremer2010nonlinear}. The package also implements a surface smoother (\cite{vico_fast_2020}) which we use for the generation of our example for Case I.

The geometry we use to illustrate Case I consists of a waveguide spiraling outward from a circular cavity. We consider the solution due to a point source located at $(x,y)=(-0.4,0.3)$. The absolute value of the solution is shown in figure \ref{fig:wg_csi_abs}.

\begin{figure}[htbp]
    \centering
    \includegraphics[width=\textwidth]{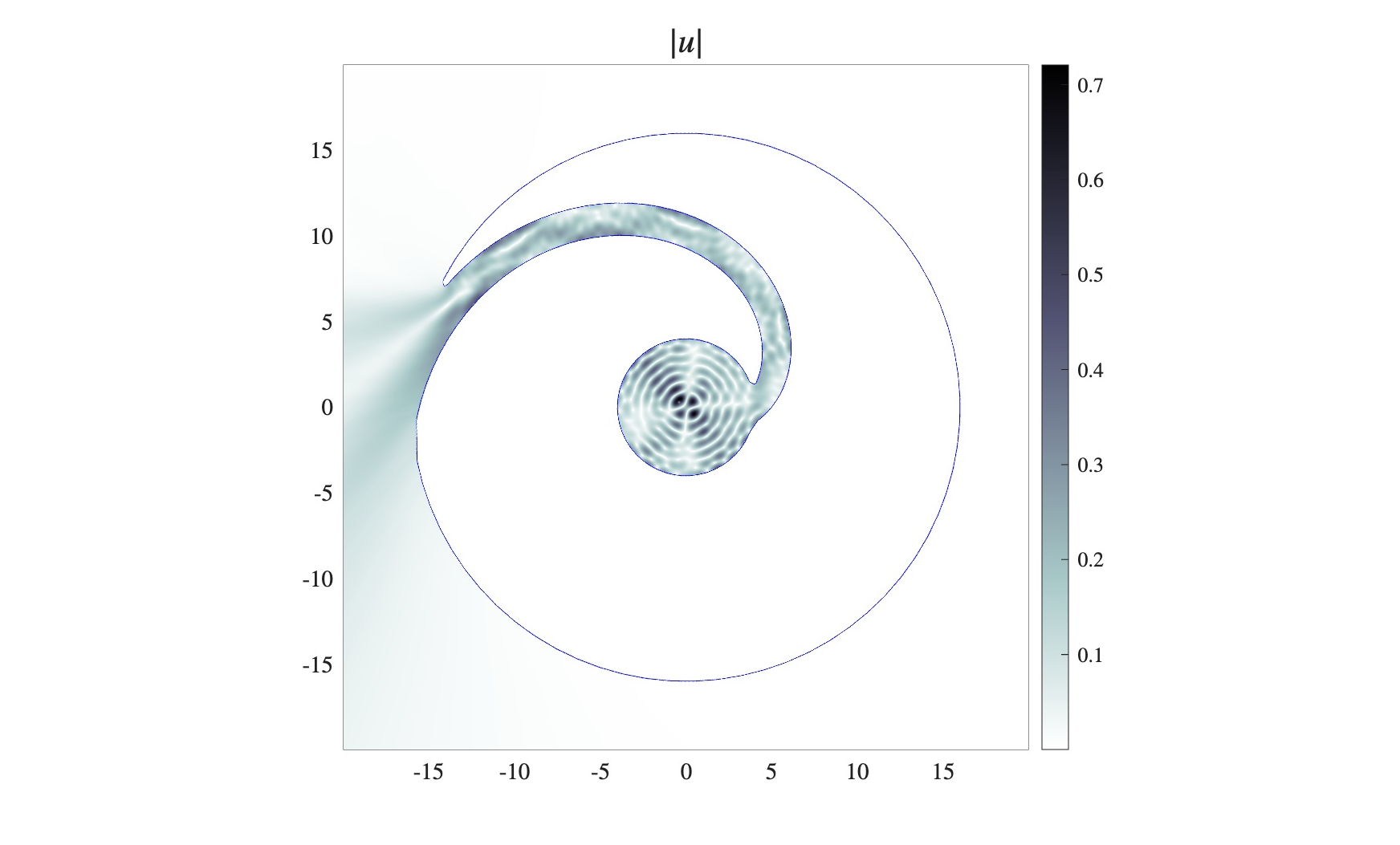}
    \caption{Magnitude of the solution due to a point source at $(x,y)=(-0.4,0.3)$, for the geometry outlined in blue.}
    \label{fig:wg_csi_abs}
\end{figure}

Our first example for Case II is a solution to \eqref{eq:pde}, where the domain $\Omega$ is taken to be a curved waveguide of finite length. The geometry is shown in figure \ref{fig:wg_geom}. On the boundary $\Gamma_*$ we take $f=0$, and on $\partial \Gamma_*$, $h=0$. Let $\Gamma_\o^1$ denote the left boundary, and let $\Gamma_\o^2$ denote the right boundary. Take $g\equiv 0$ on $\Gamma_\o^2$, and on $\Gamma_\o^1$ take $g$ to be the incoming impedance (see \eqref{eq:incimp}) due to a point source at $(x,y)=(-16.1,0)$. This is a rough approximation to the solution due to a point source at that location. The real part of the solution is given in figure \ref{fig:wg_re}. This geometry passes the analytic solution test to $\sim\!10^{-7}$.

\begin{figure}[htbp]
    \centering
    \includegraphics[width=\textwidth]{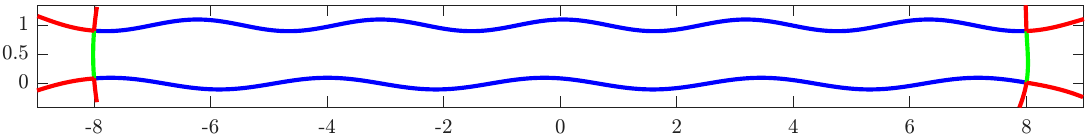}
    \caption{On the boundary of the waveguide, visco-thermal boundary conditions are used (blue), while the caps of the waveguide are given a Robin boundary condition (green). Artificial fins extend off of the corners into the exterior of the domain (red).}
    \label{fig:wg_geom}
\end{figure}
\begin{figure}[htbp]
    \centering
    \includegraphics[width=\textwidth]{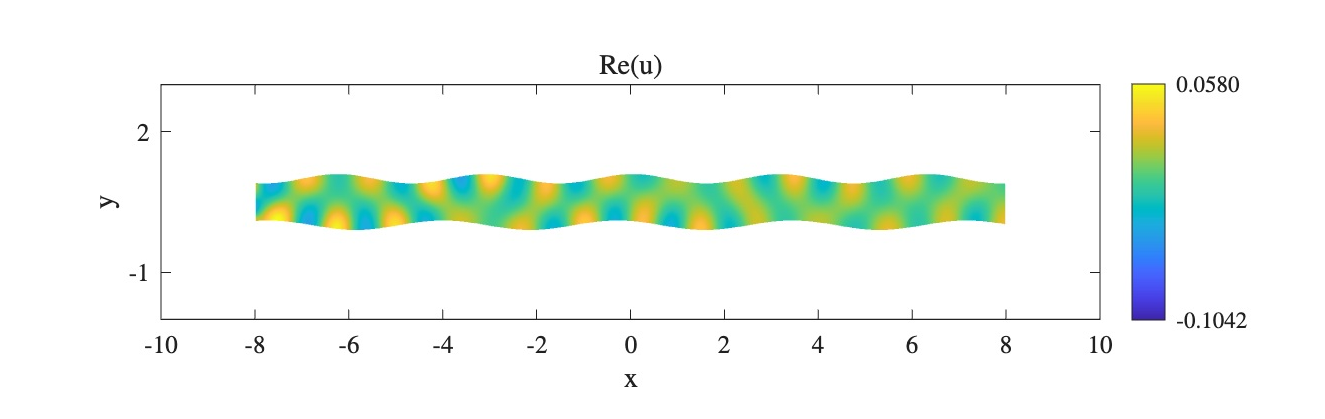}
    \caption{Real part of the solution with incoming impedance on the left boundary due to a point source.}
    \label{fig:wg_re}
\end{figure}

Next we consider the solution due purely to an inhomogeneity in the binormal boundary condition, imposed on the bottom left and top left corners of a curved waveguide. The binormal data is due to point sources at $(x,y) = (-8.1,0)$ and $(x,y) = (-8.1,0.9)$. The real part of the solution is shown in figure \ref{fig:wg_re_binorm}.

\begin{figure}[htbp]
    \centering
    \includegraphics[width=\textwidth]{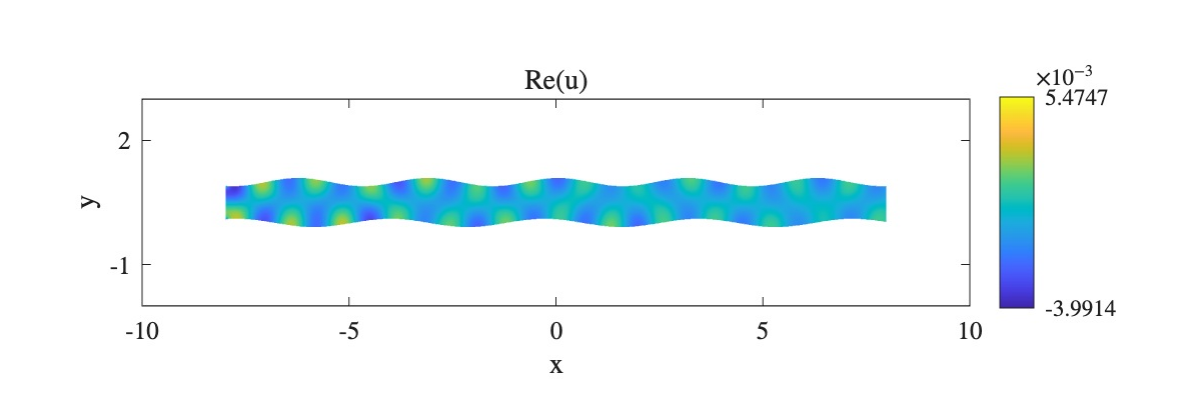}
    \caption{Real part of the solution with binormal data due to point sources near the two leftmost corners.}
    \label{fig:wg_re_binorm}
\end{figure}

In our third example for Case II, we employ impedance-to-impedance maps to decompose a complicated domain into smaller pieces (see \ref{subsec:dd}). The geometry is shown in figure \ref{fig:wg_branches_geom}. There are three waveguides with curved boundary, each connecting the triangular region on the inside to the exterior of a larger triangle. Visco-thermal boundary conditions are imposed on the boundary of the waveguide, while Neumann boundary conditions are imposed on the boundary of the triangles. The absolute value of the solution is shown in Figure \ref{fig:wg_branches_abs}. The analytic solution test for this problem is correct to $\sim\!10^{-7}$.

\begin{figure}[htbp]
    \centering
    \includegraphics[width=0.5\textwidth]{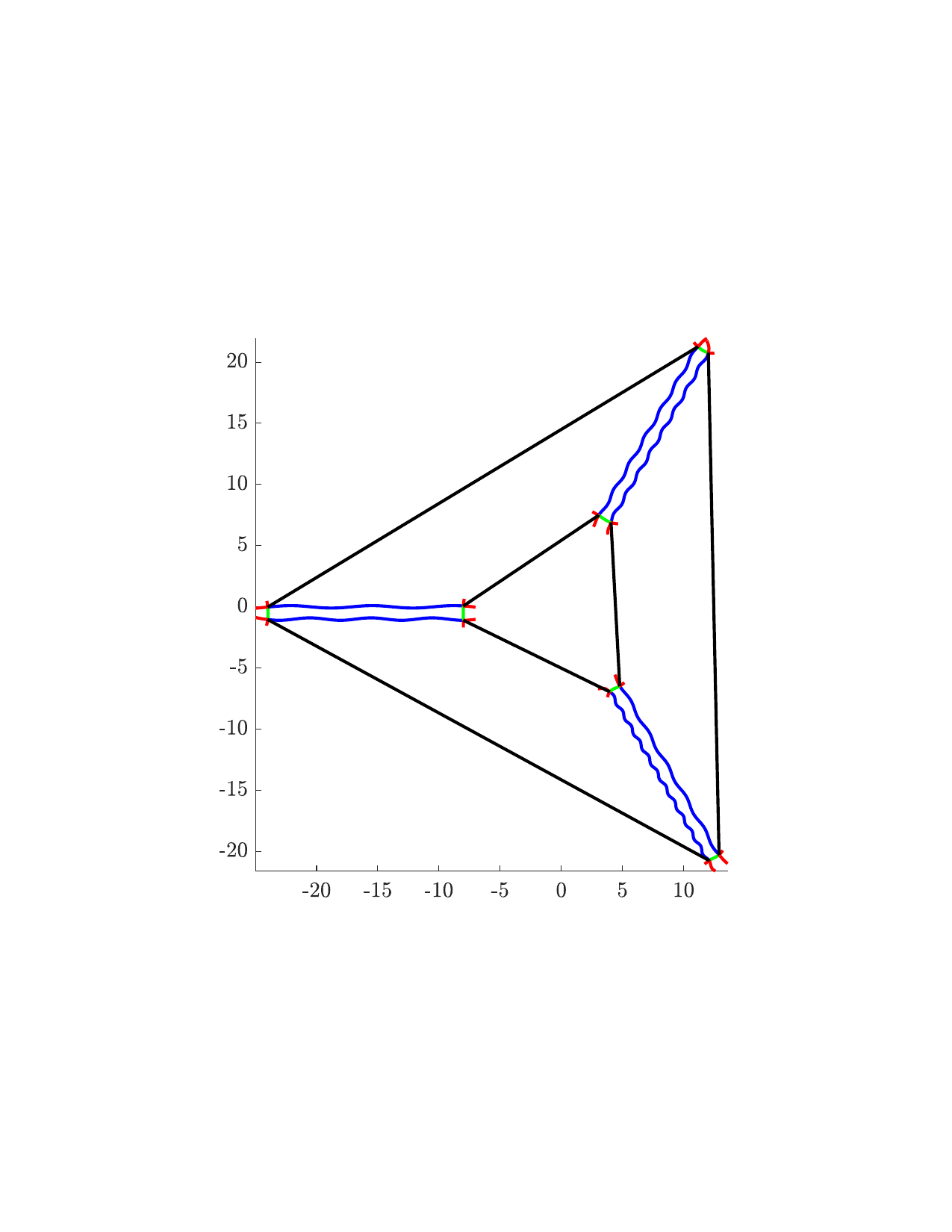}
    \caption{On the physical boundary, either Neumann (black) or visco-thermal (blue) boundary conditions are imposed. The artificial boundary, where the impedance-to-impedance matching is performed, is denoted in green. The artificial fins used for canceling corner singularities are denoted in red.}
    \label{fig:wg_branches_geom}
\end{figure}
\begin{figure}[htbp]
    \centering
    \includegraphics[width=0.5\textwidth]{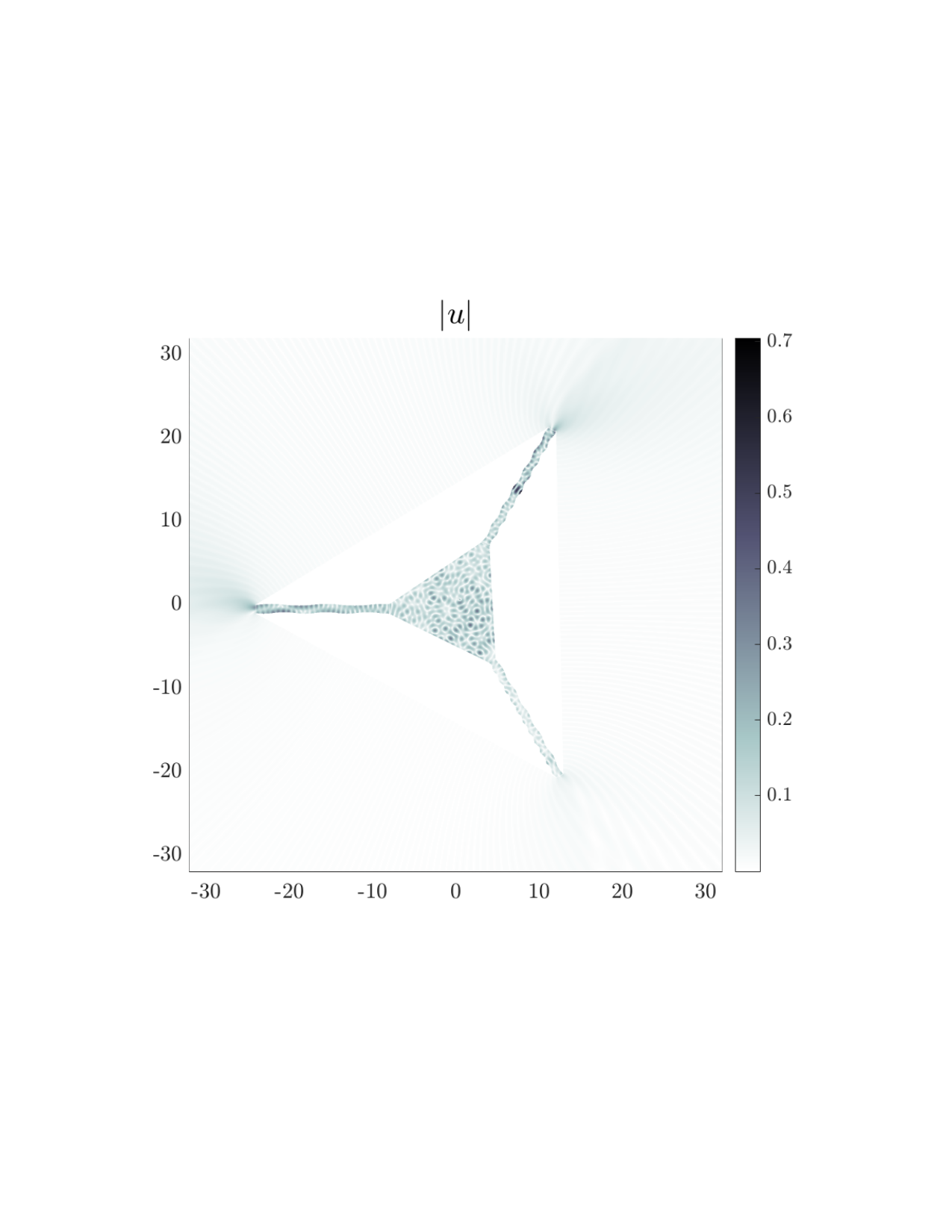}
    \caption{Magnitude of the solution due to a point source $(x,y)=(0.5,0.6)$, solved using domain decomposition.}
    \label{fig:wg_branches_abs}
\end{figure}

Our final example for Case II uses a geometry inspired by the cross-section of a `phase plug'. The geometry is shown in Figure \ref{fig:wg_pp_geom}, where the different sections of the physical and artificial boundaries are denoted as in Figure \ref{fig:wg_branches_geom}. The condition $\partial_nu=1$ is imposed on the rear wall of the interior of the geometry. All other boundary conditions are homogeneous. The absolute value of the solution is shown in Figure \ref{fig:wg_pp_abs}. The analytic solution test for this problem is correct to $\sim 10^{-7}$.

\begin{figure}[htbp]
    \centering
    \includegraphics[width=0.5\textwidth]{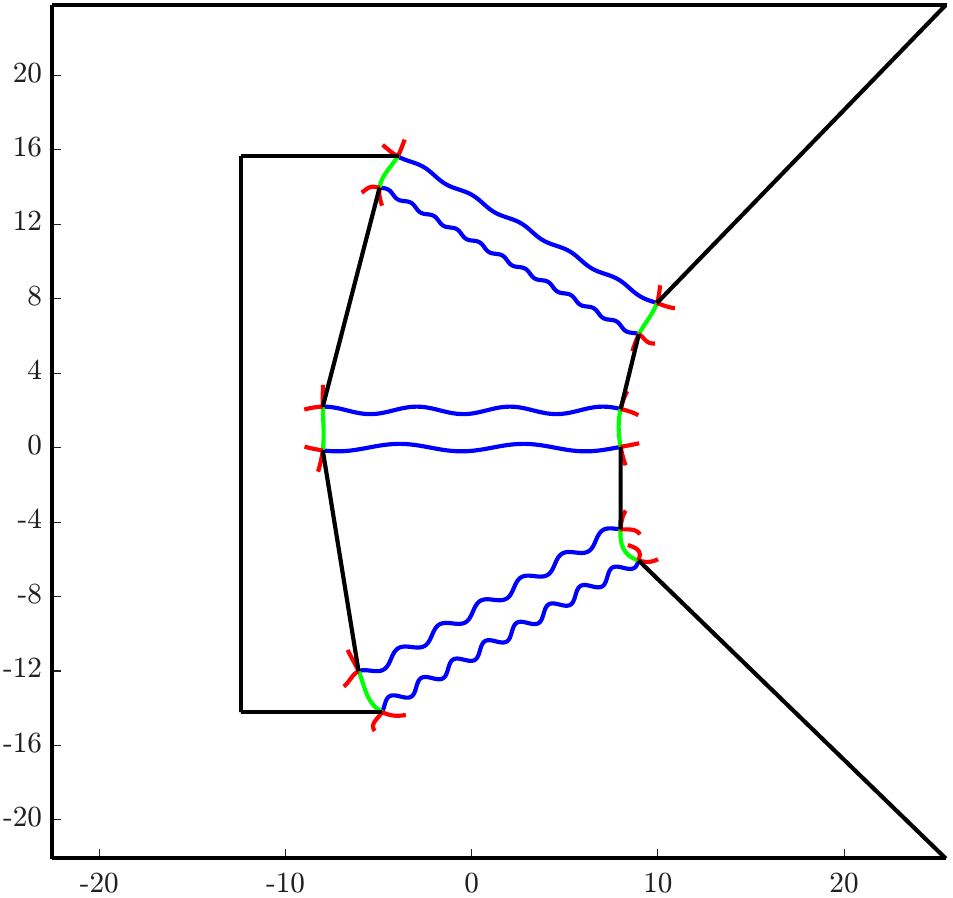}
    \caption{A geometry inspired by the cross-section of a phase plug. On the physical boundary, Neumann and viscothermal boundary conditions are imposed on the black and blue boundaries, respectively. The artificial boundary used for impedance-to-impedance matching is denoted in green, and the fins are denoted in red.}
    \label{fig:wg_pp_geom}
\end{figure}
\begin{figure}[htbp]
    \centering
    \includegraphics[width=0.9\textwidth]{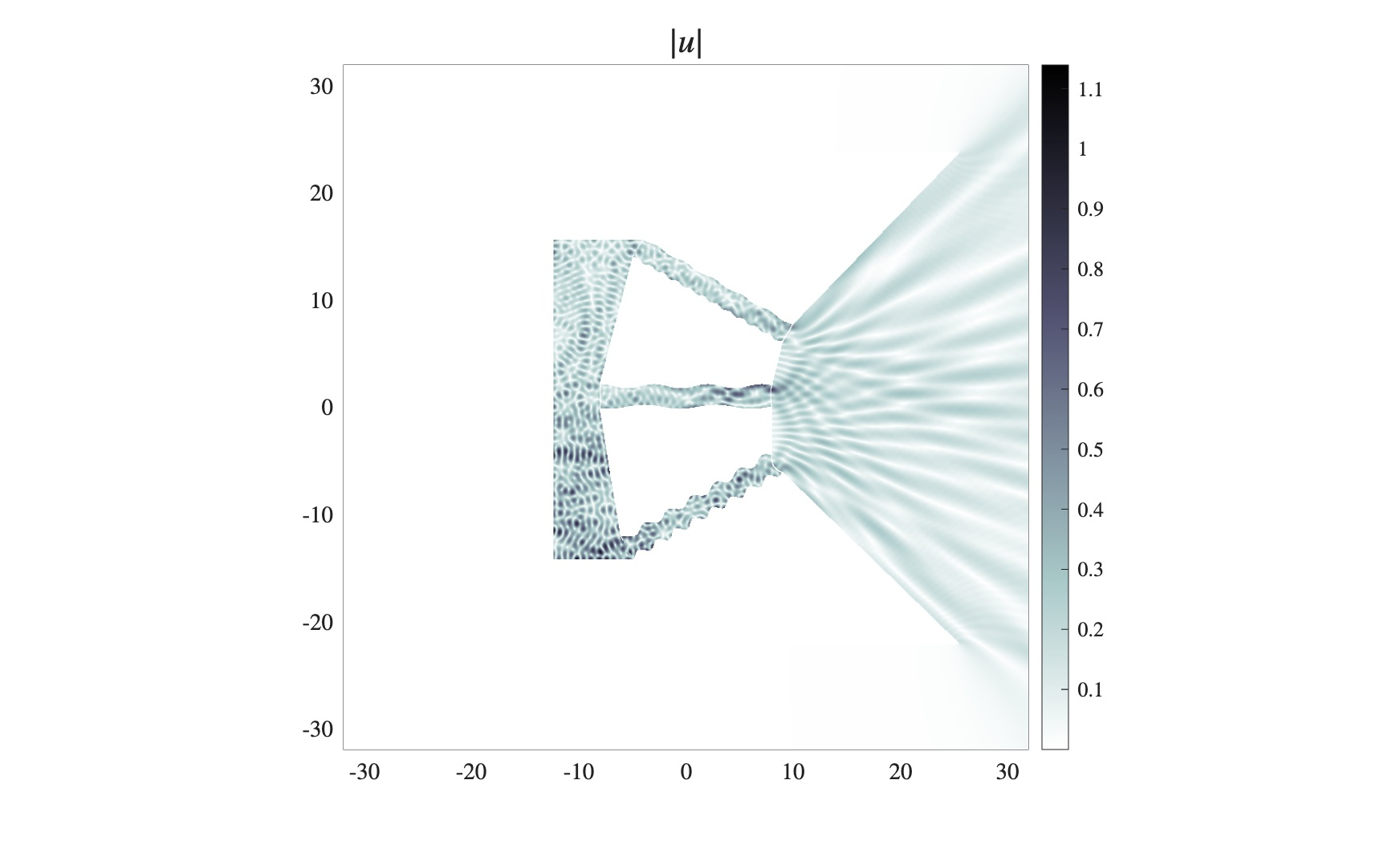}
    \caption{Magnitude of the solution due to the inhomogeneity $\partial_n u=1$ on the back wall of the interior of the device, solved using domain decomposition.}
    \label{fig:wg_pp_abs}
\end{figure}

\section{Conclusion}

A novel integral representation for the solution of the Helmholtz equation with visco-thermal boundary conditions is proposed. Two cases are considered: a smooth boundary where visco-thermal boundary conditions are imposed, and a piecewise smooth boundary with mixed boundary conditions. In the latter case, the impedance-to-impedance map is used to link the solution in an acoustically narrow region to a solution in a larger region. In the first case, the integral equation we obtain is Fredholm second-kind, while in the second case, it is Fredholm second-kind up to a boundedly invertible operator. In forthcoming work, the authors will develop an integral formulation for the three dimensional version of this problem.

\section*{Acknowledgment}

J.G.H. was supported in part by a Sloan Research Fellowship. This research was supported in part by grants from the NSF (DMS-2235451) and Simons Foundation (MPS-NITMB-00005320) to the NSF-Simons National Institute for Theory and Mathematics in Biology (NITMB).
The authors thank Alex Barnett for suggesting this problem and Manas Rachh
for many useful discussions.

\appendix

\crefalias{section}{appendix}

\section{}\label{sec:appA}

\begin{proof}[Proof of Lemma~\ref{lem:lbDlim}.]
    The kernel of $\cD$ on surface is of the form $\mu(s)+s^2\log(s)\nu(s)$, where $\mu$ and $\nu$ are smooth, so by the dominated convergence theorem, the kernel of $\lb$ applied to $\cD$ on surface is $\lb \del_{n_y}G$. Since the Laplace-Beltrami operator is the second arclength derivative, this implies $\lb \del_{n_y}G$ is at most logarithmically singular. 
    
    It is worth noting that since the kernel of $D$ satisfies the Helmholtz equation whenever $x\neq y$, \eqref{eq:lapdecomp} implies that \emph{on surface},
    \begin{equation}
        \lb \cD \sigma = -k^2\cD \sigma - (\kappa \cD' +\cD'' )\sigma,
    \end{equation}
    and that $\kappa \cD'+\cD''$ has logarithmically singular kernel.

    Now consider the limiting quantity for some finite $h$. Since $D\sigma$ satisfies the Helmholtz equation away from $\Gamma$, we have
    \begin{equation}\label{eq:lbD_pert}
        \lb \cD_{\tosp{\Gamma}{\Gamma_h}}\sigma = -k^2\cD_{\tosp{\Gamma}{\Gamma_h}}\sigma - \kappa_h \cD_{\tosp{\Gamma}{\Gamma_h}}'\sigma - \cD_{\tosp{\Gamma}{\Gamma_h}}'' \sigma,
    \end{equation}
    where $\kappa_h$ denotes the curvature on $\Gamma_h$. The jump of $D$ is given by \eqref{eq:Djump}, and $D'$ is continuous in the sense of \eqref{eq:D'cont}. It remains to compute the jump of $D''$. 
    
   Let $\vec{r}=x-y$ and $r=|\vec{r}|$. As $r\to0$, the Green's function satisfies the expansion
    \begin{equation}\label{eq:hank_exp} 
    \frac{i}{4} H_0^{(1)}(kr) = -\frac{1}{2\pi}\log(r)+\frac{k^2}{8\pi}r^2\log(r) + \mathcal{O}(r^4\log(r)). 
    \end{equation}
    Let $D_0$ denote the Laplace double layer operator, and define
    \begin{equation} 
    T[\sigma](x) = \int_\Gamma \left[ \partial_{n_y} r^2\log(r)\right]\sigma(y)\,{\rm d}s(y). 
    \end{equation}
    Then \eqref{eq:hank_exp} implies that
    \begin{equation}\label{eq:D''_pert}
\cD_{\Gamma\to\Gamma_h}''=(\cD''_0)_{\tosp{\Gamma}{\Gamma_h}}+\frac{k^2}{8\pi}T''_{\tosp{\Gamma}{\Gamma_h}}+M,
    \end{equation}
    where $M$ is continuous across $\Gamma$. The following limit is derived in \cite{kolm_quadruple_2003}: 
    \begin{equation}
        \lim_{h\to0} (\cD''_0)_{\tosp{\Gamma}{\Gamma_h}}\sigma = \frac{1}{2}\Delta_\Gamma\sigma +\cD_0'' \sigma.
    \end{equation}
    
    It remains to compute the jump of $T''$. Let $\gamma:(0,L)\to\mathbb{R}^2$ be an arc-length parameterization of $\Gamma$, and fix $y_0=\gamma(s_0)$. Take $x=y_0-hn_{y_0}$ and $y=\gamma(s_0+s)$. Then, by Taylor's theorem,
    \begin{equation}
        \partial_{n_x}^2\partial_{n_y}r^2\log(r) = \frac{2h^3+6hs^2}{(h^2+s^2)^2}+\zeta(h,s),
    \end{equation}
    where $\zeta=\mathcal{O}(1)$. Define $\Gamma^\epsilon = \gamma([0,s_0-\epsilon])\cup\gamma([s_0+\epsilon,L])$. Then
    \begin{align} 
    T''[\sigma](x) &= \int_{-\epsilon}^\epsilon\frac{2h^3+6hs^2}{(h^2+s^2)^2}\sigma(y(s))\,{\rm d}s\\
    &\hspace{1cm}+\int_{-\epsilon}^\epsilon \zeta(h,s)\sigma(y(s))\,{\rm d}s \nonumber\\
    &\hspace{1cm}+\int_{\Gamma^\epsilon} \left[\partial_{n_x}^2\partial_{n_y}r^2\log(r)\right]\sigma(y)\,{\rm d}s(y). \nonumber
    \end{align}
    The latter two integrals have no jump as $h\to0$. For the former we compute
    \begin{align}
        \int_{-\epsilon}^\epsilon\frac{2h^3+6hs^2}{(h^2+s^2)^2}\sigma(y(s))\,{\rm d}s &= \sigma(y_0)\int_{-\epsilon}^\epsilon\frac{2h^3+6hs^2}{(h^2+s^2)^2}\,{\rm d}s \\
        &\hspace{1cm}+ \int_{-\epsilon}^\epsilon\frac{2h^3+6hs^2}{(h^2+s^2)^2}\left[\sigma(y(s))-\sigma(y_0)\right]\,{\rm d}s. \nonumber
    \end{align}
    For $\sigma\in C^1(\Gamma)$, we have $\sigma(y(s))-\sigma(y_0)=\mathcal{O}(s)$. Since $C^1(\Gamma)\supset H^2(\Gamma)$ (\cite{aubin_nonlinear_1982}) the latter integrand is bounded as $h\to 0$, and its integral vanishes in the limit $\epsilon\to0$. The former integral is evaluated in the limit $h\to0$ as
    \begin{align}
        \sigma(y_0)\int_{-\epsilon}^\epsilon\frac{2h^3+6hs^2}{(h^2+s^2)^2}\,{\rm d}s &= \sigma(y_0)\left[8\arctan\left(\frac{\epsilon}{h}\right)-4\frac{h\epsilon}{h^2+\epsilon^2}\right] \\
        &\to 4\pi\sigma(y_0). \nonumber
    \end{align}
    This implies that
    \begin{equation}
        \lim_{h\to0}T_{\Gamma\to\Gamma_h}''\sigma = 4\pi \sigma + T'' \sigma,
    \end{equation}
    and thus, the limit of \eqref{eq:D''_pert} is 
    \begin{equation}\label{eq:D''_lim}
        \lim_{h\to0} \cD_{\Gamma\to\Gamma_h}''\sigma = \frac{1}{2}\lb \sigma +\frac{k^2}{2}\sigma +\cD''\sigma.
    \end{equation}
    To summarize, the limit of \eqref{eq:lbD_pert} is
    \begin{align}
        \lim_{h\to0} \lb \cD_{\Gamma\to\Gamma_h}\sigma &= -k^2\left(-\frac{I}{2}+\cD\right)\sigma-\kappa \cD'\sigma-\left(\frac{1}{2}\lb+\frac{k^2}{2}I+\cD''\right)\sigma \\
        &= -\frac{1}{2}\lb\sigma-k^2\cD\sigma-(\kappa \cD'+\cD'')\sigma\nonumber.
    \end{align}
\end{proof}
\begin{remark}
    We note that the limit we derive in \eqref{eq:D''_lim} differs from that derived in \cite{kolm_quadruple_2003} by a factor of 4 in the identity term.
\end{remark}

\section{}\label{sec:appB}

In this section we give proof of the regularizing effect of the addition of fins to the corners in our representation described in section~\ref{sec:mixedbcrep}. Recall from section~\ref{subsec:geomdesc} that the curves $\Gamma_*^i$ and $\Gamma_\o^j$ are assumed to meet orthogonally at each corner. Throughout, we will let $s$ denote the arc length along $\Gamma_*^i$ from the corner of interest, and $t$ denote the arc length along $\Gamma_\o^j$ from the same corner. 

We let $x(s)$ be the arclength parameterization of $\Gamma_*^i$ starting at the corner, and let $y(t)$ be the arclength parameterization of $\Gamma_\o^j$. Denote the normal vector at $x(s)$ by $n_x(s)$, the normal vector at $y(t)$ by $n_y(t)$, and analogously for the tangent vector $\tau$ and curvature $\kappa$. We assume without loss of generality that $\tau_y(0)=-n_x(0)$.

In this case, Taylor's theorem gives the following expansions on $\Gamma_*^i$:
\begin{align}
    x(s) &= -\tau_x(0)s-\frac{1}{2}\kappa_x(0)n_x(0)s^2 + \mathcal{O}(s^3), \\
    n_x(s) &= n_x(0)-\kappa_x(0)\tau_x(0)s+\mathcal{O}(s^2).
\end{align}
Reflecting $\Gamma_*^i$ across the line spanned by $n_x(0)$ therefore corresponds, up to $\mathcal{O}(s^3)$ in $x$ and $\mathcal{O}(s^2)$ in $n_x$, to the map $s\mapsto -s$. An analagous statement holds for $\Gamma_\o^j$. It will thus suffice in the computations below to consider the even and odd parts of the kernels of interest, in $s$ and $t$ separately, as appropriate.

Consider first the kernel of $\cD'_{\tosp{*_i}{\o_j}}$ in the vicinity of the corner. Due to the expansion \eqref{eq:hank_exp}, it in fact suffices to consider the kernel of the corresponding Laplace operator $(\cD_0')_{\tosp{*_i}{\o_j}}$. We write this in terms of $s$ and $t$ as
\begin{align}
    -\frac{1}{2\pi} \partial_{n_xn_y} \log r &= \frac{1}{2\pi}\left(\frac{n_x\cdot n_y}{r^2}-2\frac{(n_x\cdot \vec{r})(n_y\cdot\vec{r})}{r^4}\right) \\
    &= \frac{1}{2\pi}\frac{1}{(s^2+t^2)^3}\big(2ts^3+2t^3s-\kappa_y(0)ts^4+3\kappa_x(0)t^2s^3 \nonumber\\ 
    &\hspace{2cm}+3\kappa_y(0)t^3s^2-\kappa_x(0)t^4s+\mathcal{O}((s^2+t^2)^3)\big).\nonumber
\end{align}
Up to bounded terms, the effective kernel of the operator $(\tilde{\cD}_0')_{\tosp{*_i}{\o_j}}$, modified by the addition of even-symmetric fins, is twice the even part of the above, in $s$:
\begin{equation}
    \frac{1}{\pi}\frac{1}{(s^2+t^2)^3}\kappa_y(0)\left(-ts^4+3t^3s^2\right)+\mathcal{O}(1).
\end{equation}
Since $\kappa_y(0)=0$ by assumption (see section~\ref{subsec:geomdesc}), this implies that the kernel of $\tilde{\cD}'_{\tosp{*_i}{\o_j}}$ is bounded in the vicinity of the corner. Hence the operator $\tilde{\cD}'_{\tosp{*_i}{\o_j}}$ is compact from $H^2(\Gamma_*^i)$ to $L^2(\Gamma_\o^j)$.

The singular part of $F_i\cS_{\tosp{\o_j}{*_i}}$ arises from the evaluation of $\partial_\tau \cS_{\tosp{\o_j}{*_i}}$ at the corner. Again it suffices to consider the Laplace kernel: this is computed as
\begin{align}
    -\frac{1}{2\pi}\partial_{\tau_x}\log r &= -\frac{\tau_x\cdot\vec{r}}{r^2} \\
    &= \frac{s+\mathcal{O}(s^2+t^2)}{s^2+t^2}.\nonumber
\end{align}
Up to bounded terms, the effective kernel of the operator $\partial_\tau\tilde{\cS}_{\tosp{\o_j}{*_i}}$, modified by the addition of odd-symmetric fins, is twice the odd part of the above, in $t$. Thus its kernel is bounded in the vicinity of the corner, as desired.

\bibliographystyle{unsrt} 
\bibliography{abl} 

\end{document}